\numberwithin{equation}{section}
 \newcommand\thmsname{Theorem}
 \newcommand\nm@thmtype{thm}
 \theoremstyle{plain}
 \newenvironment{namedthm}[1]{
   \renewcommand\thmsname{#1}\renewcommand\nm@thmtype{namedtheorem}
   \begin{\nm@thmtype}
}
   {\end{\nm@thmtype}
}
\theoremstyle{plain}
\newtheorem{thm}{Theorem}[section]
\newtheorem*{thm*}{Theorem}
\newtheorem*{cor*}{Corollary}
\newtheorem{lem}[thm]{Lemma}
\newtheorem*{lem*}{Lemma}
\newtheorem{prop}[thm]{Proposition}
\newtheorem*{prop*}{Proposition}
\newtheorem*{conjecture*}{Conjecture}
\newtheorem*{fact*}{Conjecture}
\newtheorem*{criterion*}{Criterion}
\newtheorem*{algorithm*}{Algorithm}
\newtheorem*{ax*}{Axiom}
\newtheorem*{assumption*}{Assumption}
\newtheorem*{question*}{Question}
\theoremstyle{remark}
\newtheorem{rem}[thm]{Remark}
\newtheorem*{rem*}{Remark}
\newtheorem*{rems*}{Remarks}
\newtheorem*{claim*}{Claim}
\newtheorem*{exercise*}{Exercise}
\newtheorem*{note*}{Note}
\newtheorem{notation}[thm]{Notation}
\newtheorem*{notation*}{Notation}
\newtheorem*{summary*}{Summary}
\newtheorem*{acknowledgement*}{Acknowledgement}
\newtheorem*{conclusion*}{Conclusion}
\theoremstyle{definition}
\newtheorem{defn}[thm]{Definition}
\newtheorem*{defn*}{Definition}
\newtheorem*{example*}{Example}
\newtheorem*{examples*}{Examples}
\newtheorem*{problem*}{Problem}
\newtheorem*{xca*}{Exercise}
\newtheorem*{xcas*}{Exercises}
\newtheorem*{condition*}{Condition}
\newtheorem{void}[thm]{}
\theoremstyle{plain}
\newcommand{\xyL}[1]{%
\xydef@\xymatrixrowsep@{#1}
} 
\newcommand{\xyC}[1]{%
\xydef@\xymatrixcolsep@{#1}
} 
\begin{document}

\title{Multivariable Newton-Puiseux Theorem for Generalised Quasianalytic
Classes}

\author{Tamara Servi\thanks{Partially supported by FCT PEst OE/MAT/UI0209/2011 and by FCT Project PTDC/MAT/122844/2010.}}

\date{{}}
\maketitle
\begin{abstract}
We show how to solve explicitly an equation satisfied by a real function
belonging to certain general quasianalytic classes. More precisely,
we show that if $f\left(x_{1},\ldots,x_{m},y\right)$ belongs to such
a class, then the solutions $y=\varphi\left(x_{1},\ldots,x_{m}\right)$
of the equation $f=0$ in a neighbourhood of the origin can be expressed,
piecewise, as finite compositions of functions in the class, taking
$n^{\text{th}}$ roots and quotients. Examples of the classes under
consideration are the collection of convergent generalised power series,
a class of functions which contains some Dulac Transition Maps of
real analytic planar vector fields, quasianalytic Denjoy-Carleman
classes and the collection of multisummable series.
\end{abstract}
\emph{2010 Mathematics Subject Classification }30D60, 32B20, 32S45
(primary), 03C64 (secondary).

\section{Introduction}

The Newton-Puiseux Theorem states that, if $f\left(x,y\right)$ is
an analytic germ in two variables, then the solutions $y=\varphi\left(x\right)$
of the equation $f=0$ can be expanded as Puiseux series that are
convergent in a neighbourhood of the origin (see for example \cite{brieskorn_knorrer;plane_algebraic_curve}).
A multivariable version of this result in the real case states that,
if $f\left(x_{1},\ldots,x_{m},y\right)$ is a real analytic germ,
then, after a finite sequence of blow-ups with centre a real analytic
manifold, the solutions $y=\varphi\left(x_{1},\ldots,x_{m}\right)$
of the equation $f=0$ are analytic in a neighbourhood of the origin
(see for example \cite[Theorem 4.1]{parusinski:preparation}). An
equivalent formulation states that the solutions $y=\varphi\left(x_{1},\ldots,x_{m}\right)$
in a neighbourhood of the origin are obtained, piecewise, as finite
compositions of analytic functions, taking $n^{\text{th}}$ roots
and quotients (see for example \cite[Corollary 2.15]{dmm:exp} and
\cite[Theorem 1]{lr:prep}).

\paragraph{{}}

Here we extend this result to functions belonging to a \emph{generalised
quasianalytic class} (see Definition \ref{def:qa class}). Roughly,
a generalised quasianalytic class is a collection of algebras of continuous
real-valued functions together with an \emph{injective} $\mathbb{R}$-algebra
morphism $\mathcal{T}$ which, given the germ at zero $f$ of a function
in the collection, associates to $f$ a formal power series $\mathcal{T}\left(f\right)$
with natural or real exponents. Given a generalised quasianalytic
class, we already have a local uniformisation result \cite{rsw,martin_sanz_rolin_local_monomialization,rolin_servi:qeqa}
which allows to parametrise the zero set of a function in the class.
Our aim here is to refine this procedure, in the spirit of the elimination
result in \cite{vdd:d}, in the following way: given a function $f\left(x,y\right)$
in the class under consideration, we provide a uniformisation algorithm
which ``respects'' the variable $y$ and hence allows to solve the
equation $f=0$ with respect to $y$.

\paragraph{{}}

Examples of generalised quasianalytic classes are the following (see
Remark \ref{rem: thm abcde}).

\paragraph{{}}

\noindent a) Let $M=\left(M_{0},M_{1},\ldots\right)$ be an increasing
sequence of positive real numbers (with $M_{0}\geq1$) and $B\subseteq\mathbb{R}^{m}$
be a compact box. We assume that $M$ is strongly log-convex and we
consider the Denjoy-Carleman algebra of functions $\mathcal{C}_{B}\left(M\right)$
defined in \cite{rsw}. This is an algebra of functions $f:B\to\mathbb{R}$
which each extend to a $\mathcal{C}^{\infty}$ function on some open
neighbourhood $U\supseteq B$ and whose derivatives satisfy a certain
type of bounds depending on $M$ (see \cite[p. 751]{rsw}). The functions
in $\mathcal{C}_{B}\left(M\right)$ are not analytic in general, however,
if $\sum_{i\in\mathbb{N}}\frac{M_{i}}{M_{i+1}}=\infty$, then $\mathcal{C}_{B}\left(M\right)$
is \emph{quasianalytic}, i.e. for every $x\in B$, the algebra morphism
which associates to $f\in\mathcal{C}_{B}\left(M\right)$ its (divergent)
Taylor expansion at $x$ is injective. The quasianalytic Denjoy-Carleman
class $\mathcal{C}\left(M\right)$ is the union of the collection
$\left\{ \mathcal{C}_{B}\left(M\right):\ m\in\mathbb{N},\ B\subseteq\mathbb{R}^{m}\ \text{compact\ box}\right\} $.

\paragraph{{}}

b) Let $H=\left(H_{1},\ldots,H_{r}\right):\left(0,\varepsilon\right)\to\mathbb{R}^{r}$
be a $\mathcal{C}^{\infty}$ solution of a system of first order singular
analytic differential equations of the form $x^{p+1}y'\left(x\right)=A\left(x,y\right)$,
where $A$ is real analytic in a neighbourhood of $0\in\mathbb{R}^{p+1}$,
satisfying conditions a) and b) in \cite[p. 413]{rss}, and $A\left(0,0\right)=0$.
Suppose furthermore that $H$ admits an asymptotic expansion for $x\rightarrow0^{+}$
as in \cite[2.2]{rss}. As in \cite[Section 3]{rss}, we let $\mathcal{A}_{H}$
be the smallest collections of real germs containing the germ at zero
of the $H_{i}$ and closed under composition, monomial division and
taking implicit functions. A function $f$, defined on an open set
$U\subseteq\mathbb{R}^{m}$, is said to be $\mathcal{A}_{H}$-\emph{analytic
}if for every $a\in U$ there exists a germ $\varphi_{a}\left(x\right)\in\mathcal{A}_{H}$
such that the germ of $f\left(x\right)$ at $a$ is equal to the germ
$\varphi_{a}\left(x-a\right)$. It is proven in \cite{rss} that the
collection of all $\mathcal{A}_{H}$-analytic functions forms a quasianalytic
class of $\mathcal{C}^{\infty}$ functions.

\paragraph{{}}

\noindent c) A (formal) generalised power series in $m$ variables
$X=\left(X_{1},\ldots,X_{m}\right)$ is a series $F\left(X\right)=\sum_{\alpha}c_{\alpha}X^{\alpha}$
such that $\alpha\in[0,\infty)^{m}$, $c_{\alpha}\in\mathbb{R}$ and
there are well-ordered subsets $S_{1},\ldots,S_{m}\subseteq[0,\infty)$
such that the support of $F$ is contained in $S_{1}\times\ldots\times S_{m}$
(see \cite{vdd:speiss:gen}). The series $F$ is convergent if there
is a polyradius $r=\left(r_{1},\ldots,r_{m}\right)\in\left(0,\infty\right)^{m}$
such that $\sum_{\alpha}|c_{\alpha}|r^{\alpha}<\infty$. A convergent
generalised power series gives rise to a real-valued function $F\left(x\right)=\sum c_{\alpha}x^{\alpha}\in\mathbb{R}\left\{ x^{*}\right\} _{r}$,
which is continuous on $[0,r_{1})\times\ldots\times[0,r_{m})$ and
analytic on the interior of its domain. We denote by $\mathbb{R}\left\llbracket X^{*}\right\rrbracket $
the algebra of all formal generalised power series and consider the
algebra $\mathbb{R}\left\{ x^{*}\right\} =\bigcup_{r\in\left(0,\infty\right)^{m}}\mathbb{R}\left\{ x^{*}\right\} _{r}$
of all convergent generalised power series. Examples of convergent
generalised power series are the function $\zeta\left(-\log x\right)=\sum_{n=1}^{\infty}x^{\log n}$
(where $\zeta$ is the Riemann zeta function) and the solution $f\left(x\right)=\sum_{n,i=0}^{\infty}\frac{1}{2^{i}}x^{2+n-\frac{1}{2^{i}}}$
of the functional equation $\left(1-x\right)f\left(x\right)=x+\frac{1}{2}x\left(1-\sqrt{x}\right)f\left(\sqrt{x}\right)$.

\paragraph{{}}

\noindent d) For $R=\left(R_{1},\ldots,R_{m}\right)\in\left(0,\infty\right)^{m}$
a polyradius, we consider the algebra $\mathcal{G}\left(R\right)$
of functions defined in \cite[Definition 2.20]{vdd:speiss:multisum}
by means of sums of multisummable formal series in the real direction.
Its elements are $\mathcal{C}^{\infty}$ functions defined on $\left[0,R_{1}\right]\times\ldots\times\left[0,R_{m}\right]$
and their derivatives satisfy a Gevrey condition. By a known result
in multisummability theory, these algebras satisfy the following quasianalyticity
condition: the morphism, which associates to the germ at zero of a
function in $\mathcal{G}\left(R\right)$ its (divergent) Taylor expansion
at the origin, is injective (see \cite[Proposition 2.18]{vdd:speiss:multisum}).
We let $\mathcal{G}$ be the union of the  collection $\left\{ \mathcal{G}\left(R\right):\ m\in\mathbb{N},\ R\in\left(0,\infty\right)^{m}\right\} $.
This collection contains the function $\psi\left(x\right)$ appearing
in Binet's second formula, i.e. such that $\log\Gamma\left(x\right)=\left(x-\frac{1}{2}\right)\log\left(x\right)+\frac{1}{2}\log\left(2\pi\right)+\psi\left(\frac{1}{x}\right)$,
where $\Gamma$ is Euler's Gamma function (see \cite[Example 8.1]{vdd:speiss:multisum}).

\paragraph{{}}

\noindent e) For $r\in\left(0,\infty\right)^{m+n}$ a polyradius,
we consider the algebra $\mathcal{Q}_{m,n,r}$ defined in \cite[Definition 7.1]{krs}.
Its elements are continuous real-valued functions which have a holomorphic
extension to some ``quadratic domain'' $U\subseteq\mathbf{L}^{m+n}$,
where $\mathbf{L}$ is the Riemann surface of the logarithm. One can
define a morphism $T$ which associates to the germ $f$ of a function
in $\mathcal{Q}_{m,n,r}$ an \emph{asymptotic expansion} $T\left(f\right)\in\mathbb{R}\left\llbracket X^{*}\right\rrbracket $.
It is shown in \cite[Proposition 2.8]{krs}, using results of Ilyashenko's
in \cite{ilyashenko:dulac}, that the morphism $T$ is injective (quasianalyticity).
We let $\mathcal{Q}$ be the collection $\left\{ \mathcal{Q}_{m,n,r}:\ m,n\in\mathbb{N},\ r\in\left(0,\infty\right)^{m+n}\right\} $.
The motivation for looking at this type of algebras is that they contain
the Dulac transition maps of real analytic planar vector fields in
a neighbourhood of hyperbolic non-resonant singular points.

\paragraph{{}}

Before stating our main result, we need to give a definition. 
\begin{defn}
\label{def: cell}Let $\mathcal{A}$ be a collection of real-valued
functions. An $\mathcal{A}$\emph{-term} is defined inductively as
follows. An $\mathcal{A}$-term of depth zero is an element of $\mathcal{A}$.
Let $x=\left(x_{1},\ldots,x_{m}\right)$. A function $f\left(x\right)$
is an $\mathcal{A}$-term of depth $\leq k$ if there exist $m\in\mathbb{N},\ g\in\mathcal{A}$
and $\mathcal{A}$-terms $t_{1}\left(x\right),\ldots,t_{m}\left(x\right)$
of depth $\leq k-1$ such that $\text{Im}\left(t_{1}\right)\times\ldots\times\text{Im}\left(t_{m}\right)\subseteq\text{dom}\left(g\right)$
and $f\left(x\right)=g\left(t_{1}\left(x\right),\ldots,t_{m}\left(x\right)\right)$.

A connected set $C\subseteq\mathbb{R}^{m}$ is an $\mathcal{A}$\emph{-base}
if there are a polyradius $r\in\left(0,\infty\right)^{m}$ and $\mathcal{A}$-terms
$t_{0},t_{1},\ldots,t_{q}$ defined on $(0,r_{1})\times\ldots\times(0,r_{m})$,
such that
\[
C=\left\{ x\in(0,r_{1})\times\ldots\times(0,r_{m}):\ t_{0}\left(x\right)=0,\ t_{1}\left(x\right)>0,\ \ldots,t_{q}\left(x\right)>0\right\} .
\]
A set $D\subseteq\mathbb{R}^{m+1}$ is an $\mathcal{A}$\emph{-cell
}if there are an $\mathcal{A}$-base $C\subseteq\mathbb{R}^{m}$ and
terms $t_{1}\left(x\right),t_{2}\left(x\right)$ in $m$ variables
such that $D$ is of either of the following forms:
\begin{align*}
\left\{ \left(x,y\right):\ x\in C,\ y=t_{1}\left(x\right)\right\} , & \ \left\{ \left(x,y\right):\ x\in C,\ t_{1}\left(x\right)<y\right\} ,\\
\left\{ \left(x,y\right):\ x\in C,\ y<t_{2}\left(x\right)\right\} , & \ \left\{ \left(x,y\right):\ x\in C,\ t_{1}\left(x\right)<y<t_{2}\left(x\right)\right\} .
\end{align*}
If $A\subseteq W\subseteq\mathbb{R}^{m+1}$, then an $\mathcal{A}$\emph{-cell
decomposition of $W$ compatible with }$A$ is a finite partition
of $W$ into $\mathcal{A}$-cells such that every $\mathcal{A}$-cell
in the partition is either contained in $A$ or disjoint from $A$.

\end{defn}

\paragraph{{}}

We consider the functions $\left(\cdot\right)^{-1}:x\mapsto\begin{cases}
\frac{1}{x} & \text{if}\ x\not=0\\
0 & \text{if}\ x=0
\end{cases}$ and $\sqrt[p]{\cdot}:x\mapsto\begin{cases}
\sqrt[p]{x} & \text{if}\ x>0\\
0 & \text{if}\ x\leq0
\end{cases}$ (for all $p\in\mathbb{N}$). 

We can now state our main result.
\begin{namedthm}
{Main Theorem}\label{thm abcd}Let $\mathcal{C}$ a generalised quasianalytic
class, as in Definition \ref{def:qa class}. Let $\mathcal{A}=\mathcal{C}\cup\left\{ \left(\cdot\right)^{-1}\right\} \cup\left\{ \sqrt[p]{\cdot}:\ p\in\mathbb{N}\right\} $
and $x=\left(x_{1},\ldots,x_{m}\right)$. Let $y$ be a single variable
and let $f\left(x,y\right)\in\mathcal{C}$. Then there exist a neighbourhood
$W\subseteq\mathbb{R}^{m+1}$ of the origin and an $\mathcal{A}$-cell
decomposition of $W\cap\text{dom}\left(f\right)$ which is compatible
with the set $\left\{ \left(x,y\right)\in W\cap\text{dom}\left(f\right):\ f\left(x,y\right)=0\right\} $.
\end{namedthm}
The Main Theorem immediately implies that the solutions of the equation
$f\left(x,y\right)=0$ have the form $\varphi:C\to\mathbb{R}$, where
$C\subseteq\mathbb{R}^{m}$ is an $\mathcal{A}$-base and $\varphi\left(x\right)$
is an $\mathcal{A}$-term.

\paragraph{{}}

We now briefly illustrate the strategy of proof. In analogy with the
real analytic case, we define a class of blow-up transformations adapted
to the functions under consideration. We show that, after a finite
sequence of such transformations, the germ at zero of $f$ is normal
crossing.

We stress that the monomialisation algorithm we exhibit here differs
from the ones in \cite{rsw,martin_sanz_rolin_local_monomialization,bm_semi_subanalytic}.
In fact, the transformations we use \emph{respect} the variable $y$
in the following way: if $\rho:\mathbb{R}^{m+1}\ni\left(x',y'\right)\mapsto\left(x,y\right)\in\mathbb{R}^{m+1}$
is one of such transformations and the Main Theorem holds for $f\circ\rho\left(x',y'\right)$,
then it also holds for $f\left(x,y\right)$. Moreover, such transformations
are bijective outside a set of small dimension and the components
of the inverse map, when defined, are $\mathcal{A}$-terms. 

It is worth pointing out that our algorithm does not use the Weierstrass
Preparation Theorem, since this theorem does not always hold in generalised
quasianalytic classes (see for example \cite{parusinski_rolin:weierstrass_quasianalytic}).

\paragraph{{}}

The desingularisation procedure which allows to reduce to the case
when $f$ is normal crossing exploits the fundamental property of
quasianalyticity, which allows to deduce the wanted result for $f$
from a formal monomialisation algorithm for the series $\mathcal{T}\left(f\right)$.

\paragraph{{}}

The Main Theorem could also be deduced from a general quantifier elimination
result in \cite{rolin_servi:qeqa}. However, the solving process described
in \cite{rolin_servi:qeqa} is not algorithmic, since it uses a highly
nonconstructive result, namely an o-minimal Preparation Theorem in
\cite{vdd:speiss:preparation_theorems}. Here instead we deduce the
explicit form of the solutions of $f=0$ solely from the analysis
of the Newton polyhedron of $\mathcal{T}\left(f\right)$. 

Although all known generalised quasianalytic classes generate o-minimal
structures (see \cite{vdd:tame} for the definition and basic properties
of o-minimal structures), the proof of our main result does not use
o-minimality.

\section{Generalised quasianalytic classes}

In this section we establish our setting.

We recall the definition and main properties of generalised power
series (see \cite{vdd:speiss:gen} for more details). 

Let $m\in\mathbb{N}$. A set $S\subset[0,\infty)^{m}$ is called \emph{good}
if $S$ is contained in a cartesian product $S_{1}\times\ldots\times S_{m}$
of well ordered subsets of $[0,\infty)$. If $S$ is a good set, define
$S_{\mathrm{min}}$ as the set of minimal elements of $S$ with respect
to the following order: let $s=\left(s_{1},\ldots,s_{m}\right),\ s'=\left(s_{1}',\ldots,s_{m}'\right)\in S;$
then $s\leq s'$ iff $s_{i}\leq s_{i}'$ for all $i=1,\ldots,m$.
By \cite[Lemma 4.2]{vdd:speiss:gen}, $S_{\mathrm{min}}$ is finite. 

A \emph{formal generalised power series }has the form
\[
F(X)=\sum_{\alpha}c_{\alpha}X^{\alpha},
\]
 where $\alpha=(\alpha_{1},\ldots,\alpha_{m})\in[0,\infty)^{m},\ c_{\alpha}\in\mathbb{R}$
and $X^{\alpha}$ denotes the formal monomial $X_{1}^{\alpha_{1}}\cdot\ldots\cdot X_{m}^{\alpha_{m}}$,
and the \emph{support of }$F$ $\text{Supp}\left(F\right):=\left\{ \alpha:\ c_{\alpha}\not=0\right\} $
is a good set. These series are added the usual way and form an $\mathbb{R}$-algebra
denoted by $\mathbb{R}\left\llbracket X^{*}\right\rrbracket $. 

Let $\mathcal{G}\subseteq\mathbb{R}\left\llbracket X^{*}\right\rrbracket $
be a family of series such that the \emph{total support} $\text{Supp}\left(\mathcal{G}\right):=\bigcup_{F\in\mathcal{G}}\text{Supp}\left(F\right)$
is a good set. Then $\text{Supp}\left(\mathcal{G}\right)_{\text{min}}$
is finite and we denote by $\mathcal{G}_{\text{min}}:=\left\{ X^{\alpha}:\ \alpha\in\text{Supp}\left(\mathcal{G}\right)_{\text{min}}\right\} $
the \emph{set of minimal monomials} of $\mathcal{G}$.

\paragraph{{}}

Let $m,n\in\mathbb{N}$ and $(X,Y)=(X_{1},\ldots,X_{m},Y_{1},\ldots,Y_{n})$.
We define $\mathbb{R}\llbracket X^{*},Y\rrbracket$ as the subring
of $\mathbb{R}\llbracket(X,Y)^{*}\rrbracket$ consisting of those
series $F$ such that $\text{Supp}(F)\subset[0,\infty)^{m}\times\mathbb{N}^{n}$.
Since $\mathbb{R}\left\llbracket X^{*},Y\right\rrbracket \subseteq\mathbb{R}\left\llbracket X^{*}\right\rrbracket \left\llbracket Y\right\rrbracket $,
we say that the variables $X$ are \emph{generalised} and that the
variables $Y$ are \emph{standard}.
\begin{void}
\label{vuoto: functions}For every $m,n\in\mathbb{N}$ and polyradius
$r=\left(s_{1},\ldots,s_{m},t_{1},\ldots,t_{n}\right)\in\left(0,\infty\right)^{m+n}$,
we let $\mathcal{C}_{m,n,r}$ be an algebra of real functions, which
are defined and continuous on the set
\[
I_{m,n,r}:=[0,s_{1})\times\ldots\times[0,s_{m})\times\left(-t_{1},t_{1}\right)\times\ldots\times\left(-t_{n},t_{n}\right),
\]
and $\mathcal{C}^{1}$ on $\mathring{I}_{m,n,r}$. We denote by $x=\left(x_{1,}\ldots,x_{m}\right)$
the \emph{generalised} variables and by $y=\left(y_{1},\ldots,y_{n}\right)$
the \emph{standard} variables. We require that the algebras $\mathcal{C}_{m,n,r}$
satisfy the following list of conditions:\end{void}
\begin{itemize}
\item The coordinate functions of $\mathbb{R}^{m+n}$ are in $\mathcal{C}_{m,n,r}$. 
\item If $r'\leq r$ (i.e. if $s_{i}'\leq s_{i}$ for all $i=1,\ldots,m$
and $t_{j}'\leq t_{j}$ for all $j=1,\ldots,n$) and $f\in\mathcal{C}_{m,n,r}$,
then $f\restriction I_{m,n,r'}\in\mathcal{C}_{m,n,r'}$.
\item If $f\in\mathcal{C}_{m,n,r}$ then there exists $r'>r$ and $g\in\mathcal{C}_{m,n,r'}$
such that $g\restriction I_{m,n,r}=f$.
\item Let $k,l\in\mathbb{N}$, $s_{1}',\ldots,s_{k}',t_{1}',\ldots,t_{l}'\in\left(0,\infty\right)$
and $r'=\left(s_{1},\ldots,s_{m},s_{1}',\ldots,s_{k}',t_{1},\ldots,t_{n},t_{1}',\ldots,t_{l}'\right)$.
Then $\mathcal{C}_{m,n,r}\subset\mathcal{C}_{m+k,n+l,r'}$ in the
sense that if $f\in\mathcal{C}_{m,n,r}$ then the function
\[
\xyC{0mm}\xyL{0mm}\xymatrix{F\colon & I_{m+k,n+l,r'}\ar[rrrr] & \  & \  & \  & \mathbb{R}\\
 & \left(x_{1},\ldots,x_{m},x_{1}',\ldots,x_{k}',y_{1},\ldots,y_{n},y_{1}',\ldots,y_{l}'\right)\ar@{|->}[rrrr] &  &  &  & f\left(x_{1},\ldots,x_{m},y_{1},\ldots,y_{n}\right)
}
\]
is in $\mathcal{C}_{m+k,n+l,r'}$.
\item $\mathcal{C}_{m,n,r}\subset\mathcal{C}_{m+n,0,r}$, in the sense that
if $f\in\mathcal{C}_{m,n,r}$ then $f\restriction I_{m+n,0,r}\in\mathcal{C}_{m+n,0,r}$. \end{itemize}
\begin{defn}
\label{def: quasi-analyticity}We denote by $\mathcal{C}_{m,n}$ the
algebra of germs at the origin of the elements of $\mathcal{C}_{m,n,r}$,
for $r$ a polyradius in $\left(0,\infty\right)^{m+n}$. We say that
$\left\{ \mathcal{C}_{m,n}:\ m,n\in\mathbb{N}\right\} $ is a collection
of \emph{quasianalytic algebras of germs} if, for all $m,n\in\mathbb{N}$,
there exists an \textbf{injective} $\mathbb{R}$-algebra morphism
\[
\mathcal{T}_{m,n}:\mathcal{C}_{m,n}\to\mathbb{R}\left\llbracket X^{*},Y\right\rrbracket ,
\]
where $X=\left(X_{1},\ldots,X_{m}\right)=\mathcal{T}\left(x\right),\ Y=\left(Y_{1},\ldots,Y_{n}\right)=\mathcal{T}\left(y\right)$.
Moreover, for all $m'\geq m,\ n'\geq n$ we require that the morphism
$\mathcal{T}_{m',n'}$ extend $\mathcal{T}_{m,n}$, hence, from now
on we will write $\mathcal{T}$ for $\mathcal{T}_{m,n}$.

A number $\alpha\in[0,\infty)$ is an \emph{admissible exponent} if
there are $m,n\in\mathbb{N},$ $f\in\mathcal{C}_{m,n},\ \beta\in\text{Supp}\left(\mathcal{T}\left(f\right)\right)\subset\mathbb{R}^{m}\times\mathbb{N}^{n}$
such that $\alpha$ is a component of $\beta$. If $\mathbb{A}$ is
the set of all admissible exponents and $\mathbb{A}\not=\mathbb{N}$,
then we let $\mathbb{K}$ be the set of nonnegative elements of the
field generated by $\mathbb{A}$. Otherwise, we set $\mathbb{K}=\mathbb{A}=\mathbb{N}$.
\end{defn}
We require the collection $\left\{ \mathcal{C}_{m,n}:\ m,n\in\mathbb{N}\right\} $
to be closed under certain operations, which we now define.
\begin{defn}
\label{def:elem transf}Let $m,n\in\mathbb{N},\ \left(x,y\right)=\left(x_{1},\ldots,x_{m},y_{1},\ldots,y_{n}\right)$.
For $m',n'\in\mathbb{N}$ with $m'+n'=m+n$, we set $\left(x',y'\right)=\left(x_{1}',\ldots,x_{m'}',y_{1}',\ldots,y_{n'}'\right)$.
Let $r,r'$ be polyradii in $\mathbb{R}^{m+n}$. An \emph{elementary
transformation} is a map $I_{m',n',r'}\ni\left(x',y'\right)\mapsto\left(x,y\right)\in I_{m,n,r}$
of either of the following forms.
\begin{itemize}
\item A \emph{ramification}: let $m=m',n=n'$, $\gamma\in\mathbb{K}^{>0}$
and $1\leq i\leq m$, and set
\begin{align*}
r_{i}^{\gamma} & \left(x',y'\right)=\left(x,y\right),\ \ \ \mathrm{where}\ \begin{cases}
x_{k}=x_{k}' & 1\leq k\leq m,\ k\not=i\\
x_{i}=x{}_{i}'^{\gamma}\\
y_{k}=y_{k} & 1\leq k\leq n
\end{cases}.
\end{align*}

\item A \emph{Tschirnhausen translation}: let $m=m',n=n'$ and $H\in\mathcal{C}_{m,n-1,s}$
(where $s\in\left(0,\infty\right)^{m+n-1}$ is a polyradius), with
$H\left(0\right)=0$, and set
\[
\tau_{H}\left(x',y'\right)=\left(x,y\right),\ \ \ \mathrm{where}\ \begin{cases}
x_{k}=x{}_{k}' & 1\leq k\leq m\\
y_{n}=y_{n}'+H\left(x',y_{1}',\ldots,y_{n-1}'\right)\\
y_{k}=y_{k}' & 1\leq k\leq n-1
\end{cases}.
\]

\item A \emph{linear transformation}: let $m=m',n=n'$, $1\leq i\leq n\mathrm{\ and\ }c=\left(c_{1},\ldots,c_{i-1}\right)\in\mathbb{R}^{i-1}$,
and set
\[
L_{i,c}\left(x',y'\right)=\left(x,y\right),\ \ \ \mathrm{where}\ \begin{cases}
x_{k}=x_{k}' & 1\leq k\leq m\\
y_{k}=y_{k}' & i\leq k\leq n\\
y_{k}=y_{k}'+c_{k}y_{i}' & 1\leq k<i
\end{cases}.
\]

\item A \emph{blow-up chart, }i.e. either of the following maps:

\begin{itemize}
\item for $1\leq j<i\leq m$ and $\lambda\in(0,\infty)$, let $m'=m-1$
and $n'=n+1$ and set
\begin{align*}
\pi_{i,j}^{\lambda} & \left(x',y'\right)=\left(x,y\right),\ \ \ \mathrm{where}\ \begin{cases}
x_{k}=x_{k}' & 1\leq k<i\\
x_{i}=x_{j}'\left(\lambda+y_{1}'\right)\\
x_{k}=x_{k-1}' & i<k\leq m\\
y_{k}=y_{k+1}' & 1\leq k\leq n
\end{cases};
\end{align*}

\item for $1\leq j,i\leq m$, with $j\not=i$, let $m'=m$ and $n'=n$,
and set
\begin{align*}
\pi_{i,j}^{0} & \left(x',y'\right)=\left(x,y\right),\ \ \ \mathrm{where}\ \begin{cases}
x_{k}=x_{k}' & 1\leq k\leq m,\ k\not=i\\
x_{i}=x_{j}'x_{i}'\\
y_{k}=y_{k}' & 1\leq k\leq n
\end{cases}\\
\mathrm{and}\  & \pi_{i,j}^{\infty}=\pi_{j,i}^{0};
\end{align*}

\item for $1\leq i\leq n,\ 1\leq j\leq m$ and $\lambda\in\mathbb{R}$,
let $m'=m$ and $n'=n$, and set
\begin{align*}
\pi_{m+i,j}^{\lambda}\left(x',y'\right) & =\left(x,y\right),\ \ \ \mathrm{where}\ \begin{cases}
x_{k}=x_{k}' & 1\leq k\leq m\\
y_{i}=x_{j}'\left(\lambda+y_{i}'\right)\\
y_{k}=y_{k}' & 1\leq k\leq n,\ k\not=i
\end{cases};
\end{align*}

\item for $1\leq i\leq n,\ 1\leq j\leq m$, let $m'=m+1$ and $n'=n-1$,
and set
\[
\pi_{m+i,j}^{\pm\infty}\left(x',y'\right)=\left(x,y\right),\ \ \ \mathrm{where}\ \begin{cases}
x_{k}=x_{k}' & 1\leq k\leq m,\ k\not=j\\
x_{j}=x_{m+1}'x_{j}'\\
y_{k}=y_{k}' & 1\leq k<i\\
y_{i}=\pm x_{m+1}'\\
y_{k}=y_{k-1}' & i<k\leq n
\end{cases}.
\]

\end{itemize}
\item A \emph{reflection}: let $m'=m+1$, $n'=n-1$ and $1\leq i\leq n$,
and set
\begin{align*}
\sigma_{m+i}^{\pm} & \left(x',y'\right)=\left(x,y\right),\ \ \ \mathrm{where}\ \begin{cases}
x_{k}=x_{k}' & 1\leq k\leq m\\
y_{k}=y_{k}' & 1\leq k<i\\
y_{i}=\pm x_{m+1}'\\
y_{k}=y_{k-1}' & i<k\leq n
\end{cases}.
\end{align*}

\end{itemize}
\end{defn}
It is not difficult to see that an elementary transformation $\left(x',y'\right)\mapsto\left(x,y\right)$
induces an injective $\mathbb{R}$-algebra homomorphism $\mathbb{R}\left\llbracket X{}^{*},Y\right\rrbracket \mapsto\mathbb{R}\left\llbracket X'^{*},Y'\right\rrbracket $
by composition (where we replace $H$ by $\mathcal{T}\left(H\right)$
in the Tschirnhausen translation).
\begin{void}
\label{emp:properties of the morph}We require that the family of
algebras of germs $\left\{ \mathcal{C}_{m,n}:\ m,n\in\mathbb{N}\right\} $
satisfy the following closure and compatibility conditions with the
morphism $\mathcal{T}$:
\begin{enumerate}
\item \emph{Monomials, permutations and setting a variable equal to zero.}
For every $\alpha\in\mathbb{K}$ and $i\in\left\{ 1,\ldots,m\right\} $,
the germ $x_{i}\mapsto x_{i}^{\alpha}$ is in $\mathcal{C}_{i,0}$
and $\mathcal{T}\left(x_{i}^{\alpha}\right)=X_{i}^{\alpha}$. Moreover,
$\mathcal{C}_{m,n}$ is closed under permutations of the generalised
variables, under permutation of the standard variables, under setting
any one variable equal to zero, and the morphism $\mathcal{T}$ commutes
with these operations. 
\item \emph{Monomial division.} Let $f\in\mathcal{C}_{m,n}$ and suppose
that there exist $\alpha\in\mathbb{K},$ $p\in\mathbb{N}$ and $G\in\mathbb{R}\left\llbracket X^{*},Y\right\rrbracket $
such that $\mathcal{T}(f)\left(X,Y\right)=X_{i}^{\alpha}Y_{j}^{p}G\left(X,Y\right)$,
for some $i\in\left\{ 1,\ldots,m\right\} $ and $j\in\left\{ 1,\ldots,n\right\} $.
Then there exists $g\in\mathcal{C}_{m,n}$ such that $f\left(x,y\right)=x_{i}^{\alpha}y_{j}^{p}g\left(x,y\right)$.
It follows that $\mathcal{T}\left(g\right)=G$. 
\item \emph{Elementary transformations.} Let $f\in\mathcal{C}_{m,n}$ and
$\nu:\hat{I}_{m',n',r'}\to\hat{I}_{m,n,r}$ be an elementary transformation.
Then the germ of $f\circ\nu$ belongs to $\mathcal{C}_{m',n'}$ and
$\mathcal{T}\left(f\circ\nu\right)=\mathcal{T}\left(f\right)\circ\nu$.
\end{enumerate}

Notice that, thanks to the closure under monomial division and under
linear transformations (which is an instance of Condition 3), $\mathcal{C}_{m,n}$
is closed under taking partial derivatives with respect to any of
the standard variables. In fact, if $f\in\mathcal{C}_{m,n}$, then
the germ of $\frac{\partial f}{\partial y_{n}}$ is obtained as the
germ of $\frac{f\left(x,y_{1},\ldots,y_{n-1},y_{n}+w\right)-f\left(x,y\right)}{w}$.
\begin{enumerate}[resume]
\item \emph{Implicit functions in the standard variables.} Let $f\in\mathcal{C}_{m,n}$
and suppose that $\frac{\partial f}{\partial y_{n}}\left(0\right)$
is nonzero. Then there exists $g\in\mathcal{C}_{m,n-1}$ such that
$f\left(x,y_{1},\ldots,y_{n-1,}g\left(x,y_{1},\ldots,y_{n-1}\right)\right)=0$.
It follows that 
\[
\mathcal{T}\left(f\right)\left(X,Y_{1},\ldots,Y_{n-1},\mathcal{T}\left(g\right)\left(X,Y_{1},\ldots,Y_{n-1}\right)\right)=0.
\]

\item \emph{Truncation}. Let $f\in\mathcal{C}_{m,n}$. Write $\mathcal{T}\left(f\right)=\sum_{\alpha\in[0,\infty)}a_{\alpha}(X_{1},\ldots,X_{m-1},Y)X_{m}^{\alpha}$
and let $\alpha_{0}\in[0,\infty)$. Then there exists $g\in\mathcal{C}_{m,n}$
such that $\mathcal{T}\left(g\right)=\sum_{\alpha<\alpha_{0}}a_{\alpha}X_{m}^{\alpha}$.
\end{enumerate}
\end{void}
\begin{rem}
\label{rem: taylor}As a consequence of the first three conditions
in \ref{emp:properties of the morph}, it is easy to see that $\mathcal{\mathcal{T}}\left(f\right)\left(0,Y\right)$
is the Taylor expansion of $f\left(0,y\right)$ with respect to $y$.
Moreover, Condition 5 follows automatically from the previous conditions
if $X_{m}$ is a standard variable. Finally, by the binomial formula
and Condition 4, if $U\in\mathcal{C}_{m,n}$ is a \emph{unit} (i.e.
an invertible element) and $\alpha\in\mathbb{K}$, then $U^{\pm\alpha}\in\mathcal{C}_{m,n}$. \end{rem}
\begin{defn}
\label{def:qa class}A collection of real functions $\mathcal{C}=\bigcup\left\{ \mathcal{C}_{m,n,r}:\ m,n\in\mathbb{N},\ r\in\left(0,\infty\right)^{m+n}\right\} $
is a \emph{generalised quasianalytic class} if the algebras $\mathcal{C}_{m,n,r}$
satisfy the properties in \ref{vuoto: functions} and the algebras
of germs $\mathcal{C}_{m,n}$ are quasianalytic (see Definition \ref{def: quasi-analyticity})
and satisfy the conditions in \ref{emp:properties of the morph}.\end{defn}
\begin{rem}
\label{rem: thm abcde}The Main Theorem applies to all the classes
mentioned in the introduction, where the morphism $\mathcal{T}$ is
the Taylor expansion at zero in cases a), b) and d), the identity
in case c) and the asymptotic expansion $f\mapsto T\left(f\right)$
in case e). In fact, quasianalyticity is tautological in case c),
it is proven in \cite{rss} in case b) and it follows by classical
theorems in cases a), d) and e) (see \cite{rudin:realandcomplex,tou:gevrey,ilyashenko:dulac}).
Moreover, the closure and compatibility conditions in \ref{emp:properties of the morph}
are verified by construction in case b). They are proven in \cite[Section 3]{rsw}
for case a), in \cite[Sections 5,6]{vdd:speiss:gen} for case c),
in \cite[Sections 4,5]{vdd:speiss:multisum} for case d) and finally
in \cite[Sections 5,6]{krs} for case e). In particular, in cases
a), b) and e) the set $\mathbb{A}$ of admissible exponents is $\mathbb{N}$,
so Condition 5 (truncation) in \ref{emp:properties of the morph}
is void. In case c) Condition 5 is clearly satisfied and in case e)
it is a consequence of \cite[Proposition 5.6]{krs}. Notice that in
cases c) and e) the functions $x\mapsto\sqrt[p]{x}$ $\left(p\in\mathbb{N}\right)$
already belong to the collection $\mathcal{C}$.
\end{rem}

\section{Strategy of proof of the Main Theorem}

The key step for the proof of the Main Theorem is a monomialisation
algorithm which respects a given variable. The monomialisation tools
are the elementary transformations defined in \ref{def:elem transf},
the use of which we now describe.

\begin{defn}
\label{def: elem family}Let $k\geq1$ and for all $i\in\left\{ 1,\ldots,k\right\} $
let $\nu_{i}:\left(x_{\left(i\right)}',y_{\left(i\right)}'\right)\mapsto\left(x_{\left(i\right)},y_{\left(i\right)}\right)$
be an elementary transformation, where $x_{\left(i\right)}'$ is an
$m_{i}'$-tuple, $y_{\left(i\right)}'$ is an $n_{i}'$-tuple, $x_{\left(i\right)}$
is an $m_{i}$-tuple and $y_{\left(i\right)}$ is an $n_{i}$-tuple,
with $m_{i}'+n_{i}'=m_{i}+n_{i}$. If $k=1$ or if $k>1$ and $m_{i}=m_{i-1}'$
for all $i=1,\ldots,k$, then we say that $\rho:=\nu_{1}\circ\ldots\circ\nu_{k}$
is an \emph{admissible transformation}.

An \emph{elementary family }is either of the following collections
of elementary transformations: $\left\{ r_{i}^{\gamma}\right\} $
$\text{(for\ some}\ 1\leq i\leq m\text{)},$ $\left\{ \sigma_{m+i}^{+},\sigma_{m+i}^{-}\right\} $
$\text{(for\ some}\ 1\leq i\leq n\text{)},$ $\left\{ \tau_{H}\right\} ,$
$\left\{ L_{i,c}\right\} $ $\text{(for\ some}\ 1\leq i\leq n\text{)},$
$\left\{ \pi_{i,j}^{\lambda}:\ \lambda\in\left[0,\infty\right]\right\} $
$\text{(for\ some}\ 1\leq i,j\leq m\text{)},$ or $\left\{ \pi_{m+i,j}^{\lambda}:\ \lambda\in\mathbb{R}\cup\left\{ \pm\infty\right\} \right\} $
$\text{(for\ some}\ 1\leq i\leq n,\ 1\leq j\leq m\text{)}$. An \emph{admissible
family} is defined inductively. An admissible family of length $1$
is an elementary family. An admissible family $\mathcal{F}$ of length
$\leq q$ is obtained from an elementary family $\mathcal{F}_{0}$
in the following way: for all $\nu\in\mathcal{F}_{0}$, let $\mathcal{F}_{\nu}$
be an admissible family of length $\leq q-1$ such that $\forall\rho'\in\mathcal{F}_{\nu},\ \nu\circ\rho'$
is an admissible transformation and define $\mathcal{F}=\left\{ \nu\circ\rho':\ \nu\in\mathcal{F}_{0},\ \rho'\in\mathcal{F}_{\nu}\right\} $.

Finally, we say that a series $F\in\mathbb{R}\left\llbracket X^{*},Y\right\rrbracket $
has a certain property $P$ \emph{after admissible family} if there
exists ad admissible family $\mathcal{F}$ such that for every $\rho\in\mathcal{F}$
the series $F\circ\rho\left(X',Y'\right)$ has the property $P$.
The same notation extends to elements of $\mathcal{C}$.
\end{defn}
We fix a generalised quasianalytic class $\mathcal{C}$ and we let
$\widehat{\mathcal{C}}_{m,n}$ be the image of $\mathcal{C}_{m,n}$
under the morphism $\mathcal{T}$ and $\widehat{\mathcal{C}}=\bigcup\widehat{\mathcal{C}}_{m,n}$.
It follows from the conditions in \ref{emp:properties of the morph}
that, if $\rho:I_{m',n',r'}\ni\left(x',y'\right)\mapsto\left(x,y\right)\in I_{m,n,r}$
is an admissible transformation and $F\left(X,Y\right)\in\widehat{\mathcal{C}}_{m,n}$,
then $F\left(X',Y'\right)\in\widehat{\mathcal{C}}_{m',n'}$. 

Moreover, it is easy to verify that if $\mathcal{G}\subseteq\mathbb{R}\left\llbracket X^{*},Y\right\rrbracket $
is a collection with good total support, then the collection $\left\{ F\circ\rho:\ F\in\mathcal{G}\right\} $
has good total support. For example, let $F\in\mathbb{R}\left\llbracket X^{*},Y\right\rrbracket $
and $H\in\mathbb{R}\left\llbracket X^{*},Y_{1},\ldots,Y_{n-1}\right\rrbracket $;
suppose $\mathrm{Supp}\left(F\right)\subseteq S_{1}\times\ldots\times S_{m}\times\mathbb{N}^{n}$
and $\mathrm{Supp}\left(H\right)\subseteq S'_{1}\times\ldots\times S'_{m}\times\mathbb{N}^{n-1}$,
where $S_{i},S'_{i}\subset[0,\infty)$ are well ordered sets. Then
we have $\mathrm{Supp}\left(F\circ L_{i,c}\right)\subseteq S_{1}\times\ldots\times S_{m}\times\mathbb{N}^{n}$
and $\mathrm{Supp}\left(F\circ\tau_{H}\right)\subseteq\tilde{S}_{1}\times\ldots\times\tilde{S}_{m}\times\mathbb{N}^{n}$,
with $\tilde{S}_{k}=\left\{ a+lb:\ a\in S_{k},\ b\in S'_{k},\ l\in\mathbb{N}\right\} $.
Moreover, $\mathrm{Supp}\left(F\circ r_{i}^{\gamma}\right)\subseteq\tilde{S}_{1}\times\ldots\times\tilde{S}_{m}\times\mathbb{N}^{n}$,
with $\tilde{S}_{i}=\left\{ \gamma a:\ a\in S_{i}\right\} $ and $\tilde{S}_{k}=S_{k}$
for $k\not=i$. Finally, for $1\leq i,j\leq m$ with $i\not=j$, we
have $\mathrm{Supp}\left(F\circ\pi_{i,j}^{0}\right)\subseteq\tilde{S}_{1}\times\ldots\times\tilde{S}_{m}\times\mathbb{N}^{n}$,
with $\tilde{S}_{j}=\left\{ a+b:\ a\in S_{j},\ b\in S_{i}\right\} $
and $\tilde{S}_{k}=S_{k}$ for $k\not=j$. The argument for the other
types of blow-up transformation and for reflections is similar.
\begin{void}
\label{vuoto: normal}A series $F\in\widehat{\mathcal{C}}_{m,n}$
is \emph{normal }if there are $\alpha\in[0,\infty)^{m},\ \beta\in\mathbb{N}^{n}$
and a unit $U\in\left(\widehat{\mathcal{C}}_{m,n}\right)^{\times}$
such that $F\left(X,Y\right)=X^{\alpha}Y^{\beta}U\left(X,Y\right)$. \end{void}
\begin{notation}
Throughout this section, we let $m,n\in\mathbb{N},\ \left(x,y\right)=\left(x_{1},\ldots,x_{m},y_{1},\ldots,y_{n}\right)$
and $z$ be a single variable. We let $\mathcal{C}_{m,n,1}$ be either
$\mathcal{C}_{m,n+1}$ (i.e. $z$ is considered as a standard variable)
or $\mathcal{C}_{m+1,n}$ (i.e. $z$ is considered as a generalised
variable). The same convention applies to the formal variables $X,Y,Z$
and to $\widehat{\mathcal{C}}$.
\end{notation}
Let $f\left(x,y,z\right)\in\mathcal{C}_{m,n,1}$. Our first aim is
to show that, after a family of admissible transformations \textquotedblleft{}respecting\textquotedblright{}\emph{
}$Z$, the series $\mathcal{T}\left(f\right)\left(X,Y,Z\right)$ is
normal. This motivates the next definition.
\begin{defn}
\label{def: vertical}Let $\nu:I_{m',n'+1,r'}\ni\left(x',y',z'\right)\mapsto\left(x,y,z\right)\in I_{m,n+1,r}$
be an elementary transformation. Let $\nu_{0},r'_{0},r_{0}$ denote
the first $m+n$ components of $\nu,r',r$ respectively. We say that
$\nu$ \emph{respects} \emph{the variable} $z$ if $\nu_{0}$ does
not depend on $z'$. Hence $\nu_{0}:I_{m',n',r'_{0}}\ni\left(x',y'\right)\mapsto\left(x,y\right)\in I_{m,n,r_{0}}$
is an elementary transformation. Analogously, we extend this definition
to the case when $z'$ and$\slash$or $z$ are generalised variables
by requiring that the components of $\nu$ which correspond to the
variables $\left(x,y\right)$ depend only on $\left(x',y'\right)$
and not on $z'$.\end{defn}
\begin{lem}
\label{lem: singular set}Suppose that $\nu$ respects $z$, as in
the above definition. Then there exists a set $S\subseteq I_{m',n',r'_{0}}$
(which is either empty or the zeroset of some variable) such that
the maps $\nu\restriction I_{m',n'+1,r'}\setminus\left(S\times\mathbb{R}\right)$
and $\nu_{0}\restriction I_{m',n',r'_{0}}\setminus S$ are bijections
onto their image and for all $\left(x',y'\right)\in I_{m',n',r'_{0}}\setminus S$
the map $z'\mapsto z=\nu_{m+n+1}\left(x',y',z'\right)$ is a monotonic
bijection onto its image. Moreover, the components of the inverse
maps $\left(x,y\right)\mapsto\left(x',y'\right)$ and $\left(x',y',z\right)\mapsto z'$
are $\mathcal{A}$-terms. Finally, if $S\neq\emptyset$ then $\nu$
is a blow-up chart and $\nu\left(S\times\mathbb{R}\right)$ is the
common zeroset of two variables.\end{lem}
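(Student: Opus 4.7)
\textit{Proof plan.} The plan is to proceed by case analysis over the elementary transformations listed in Definition \ref{def:elem transf}. For each type of $\nu$ compatible with the condition of respecting $z$, I would identify the singular set $S$ (possibly empty) and verify the four claims: that the maps $\nu_0 \restriction I_{m',n',r'_0} \setminus S$ and $\nu \restriction I_{m',n'+1,r'} \setminus (S \times \mathbb{R})$ are bijections onto their images; that for every fixed $(x', y') \notin S$ the map $z' \mapsto z = \nu_{m+n+1}(x',y',z')$ is a strictly monotonic bijection; that the components of the two inverse maps are $\mathcal{A}$-terms; and that if $S \neq \emptyset$, then $\nu$ is a blow-up chart and $\nu(S \times \mathbb{R})$ is the common zeroset of two variables.

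First I would dispatch the ``bijective'' elementary transformations: ramifications $r_i^\gamma$, Tschirnhausen translations $\tau_H$, linear transformations $L_{i,c}$, and reflections $\sigma_{m+i}^\pm$. For these, $\nu_0$ is already a bijection on its whole domain, so one may take $S = \emptyset$. The inverse of $\nu_0$ is obtained by explicit algebraic manipulation: ramifications invert via $x_i' = x_i^{1/\gamma}$, which lies in $\mathcal{A}$ because $\mathcal{A}$ contains the $p$-th roots; Tschirnhausen and linear transformations invert by subtraction, using $H \in \mathcal{C} \subseteq \mathcal{A}$; reflections invert via $x_{m+1}' = \pm y_i$. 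The $z' \mapsto z$ map is either the identity or an explicit monotone function of $z'$ such as $z = z' + H(x',y')$ (for a Tschirnhausen on $z$) or $z = z'^\gamma$ (for a ramification of $z$); in either case monotonicity is immediate and the inverse is an $\mathcal{A}$-term.

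Next I would treat the blow-up charts. In each such chart, one new coordinate is a product involving a distinguished generalised variable $x_j'$ (or $x_{m+1}'$ in the $\pm\infty$ cases): for example $x_i = x_j'(\lambda + y_1')$, or $x_i = x_j' x_i'$, or $y_i = x_j'(\lambda + y_i')$, or $x_j = x_{m+1}' x_j'$ together with $y_i = \pm x_{m+1}'$. The map fails to be injective precisely on the zeroset of this variable, so I would set $S = \{x_j' = 0\}$ (respectively $S = \{x_{m+1}' = 0\}$). Outside $S$, the inverse is given by explicit division --- for instance $y_1' = x_i/x_j - \lambda$ or $x_i' = x_i/x_j$ --- and these are $\mathcal{A}$-terms because $\mathcal{A}$ contains $(\cdot)^{-1}$. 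When the blow-up acts on $z$ itself, the $z$-component has the form $z = x_j'(\lambda + z')$ or $z = \pm x_{m+1}'$; at fixed $x_j' > 0$ (resp.\ $x_{m+1}' > 0$) this is a strictly monotonic affine function of $z'$, while at the boundary it degenerates, again confirming the choice of $S$. Finally, a direct computation in each subcase identifies $\nu(S \times \mathbb{R})$ with the common zeroset of exactly two coordinate variables, such as $\{x_i = 0, x_j = 0\}$, $\{x_j = 0, y_i = 0\}$, or $\{x_j = 0, z = 0\}$ according to the chart.

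The only real obstacle is the organisational one of enumerating all subcases compatible with the ``respect $z$'' condition, in particular those where $z$ (or $z'$) is generalised rather than standard, and those where $z$ itself is the variable being translated, ramified, or blown up. Once the correct $S$ is pinned down in each subcase, verification reduces to the unique solvability of a single affine equation in one variable together with the closure of $\mathcal{A}$ under inverses, $p$-th roots and composition with $\mathcal{C}$.
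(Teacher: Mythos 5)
Your case-by-case verification is exactly the paper's approach: the paper's proof only writes out one representative chart, $\nu\colon(x',y',z')\mapsto(x',y',x_1'(\lambda+z'))$ with $S=\{x_1'=0\}$, $\nu(S\times\mathbb{R})=\{x_1=z=0\}$ and inverse $z'=\frac{z}{x_1'}-\lambda$, leaving the remaining elementary transformations to the same routine check that you outline. Two small points to tidy: for a ramification with non-integer $\gamma\in\mathbb{K}^{>0}$ the inverse $x\mapsto x^{1/\gamma}$ lies in $\mathcal{A}$ via Condition 1 of \ref{emp:properties of the morph} (since $\mathbb{K}$ is then the nonnegative part of a field, so $1/\gamma\in\mathbb{K}$), not via the $p$-th root functions, which only cover $\gamma\in\mathbb{N}$; and the charts with $z=\pm x_{m+1}'$, i.e.\ $\pi_{m+n+1,j}^{\pm\infty}$, do not respect $z$ and so lie outside the lemma's hypotheses (they are precisely the ``almost respects'' exceptions treated separately in the proof of the Main Theorem).
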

\begin{proof}
We only give the details for $\nu:\left(x',y',z'\right)\mapsto\left(x',y',x_{1}'\left(\lambda+z'\right)\right)$,
for some $\lambda\in\mathbb{R}$. In this case, $\nu_{0}$ is the
identity map, $S=\left\{ x_{1}'=0\right\} $ and $\nu\left(S\times\mathbb{R}\right)=\left\{ x_{1}=z=0\right\} $.
For all $\left(x',y'\right)\not\in S$, the inverse function $z\mapsto z'=\frac{z}{x_{1}'}-\lambda$
is an $\mathcal{A}$-term. $ $\end{proof}
\begin{defn}
We say that an admissible family $\mathcal{F}$ of transformations
$\left(x',y',z'\right)\mapsto\left(x,y,z\right)$ \emph{respects }$z$
if all the elementary transformations appearing in $\mathcal{F}$
respect $z$ (with the obvious convention that if, for example, $\mathcal{F}\ni\rho=\nu_{1}\circ\nu_{2}:\left(x',y',z'\right)\mapsto\left(x'',y'',z''\right)\mapsto\left(x,y,z\right)$,
then $\nu_{1}$ respects $z$ and $\nu_{2}$ respects $z''$). We
say that $\mathcal{F}$ \emph{almost} \emph{respects} $z$ if for
all $\rho=\nu_{1}\circ\ldots\circ\nu_{k}$ the elementary transformations
$\nu_{1},\ldots,\nu_{k-1}$ respect $z$ and either $\nu_{k}$ respects
$z$ or $\nu_{k}$ is a blow-up chart at infinity involving $z$ and
some other variable (i.e. $\nu_{k}$ is either $\pi_{m+1,j}^{\infty}$
or $\pi_{m+n+1,j}^{\pm\infty}$, for some $j\in\left\{ 1,\ldots,m\right\} $).
\end{defn}
We prove the following monomialisation result.
\begin{thm}
\label{thm: vertical monomialisation}Let $F\left(X,Y,Z\right)\in\widehat{\mathcal{C}}_{m,n,1}$.
Then, after admissible family almost respecting $Z$, we have that
$F$ is normal. 
\end{thm}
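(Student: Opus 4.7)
The plan is to prove Theorem~\ref{thm: vertical monomialisation} by induction on a lexicographic pair $(n, w)$, where $n$ is the number of standard variables other than $Z$ and $w$ is a well-founded invariant of the Newton polyhedron of $F$ in $(X,Y,Z)$ (for instance, the cardinality of $\mathrm{Supp}(F)_{\min}$ refined by the order of $F$ in $Z$). The core idea is to treat $Z$ separately through its formal expansion, monomialise the leading coefficient in $(X,Y)$ by transformations that automatically respect $Z$, and handle the remaining terms by blow-ups that either still respect $Z$ or, as the terminal step of a branch, are blow-ups at infinity that swap $Z$ into a generalised variable.

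First I would expand $F(X,Y,Z) = \sum_k a_k(X,Y)\, Z^k$, with $k$ ranging over $\mathbb{N}$ or over a well-ordered subset of $\mathbb{K}^{\geq 0}$ according to whether $Z$ is standard or generalised. Since the total support of $F$ is good, the set of exponents of $Z$ actually appearing is well-ordered; let $k_0$ be its minimum, with $a_{k_0} \neq 0$ by quasianalyticity. The monomial division property of \ref{emp:properties of the morph} lets me factor $F = Z^{k_0} F_1$, where $F_1(X,Y,0) = a_{k_0}(X,Y)$. I then apply the monomialisation theorem inductively (or, in the base case, the corresponding result of \cite{rsw, martin_sanz_rolin_local_monomialization}) to $a_{k_0} \in \widehat{\mathcal{C}}_{m,n}$: this produces an admissible family $\mathcal{F}_0$ of transformations in $(X,Y)$ only such that after each $\rho \in \mathcal{F}_0$ one has $a_{k_0} = X^\alpha Y^\beta V$ with $V$ a unit. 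Because the $(X,Y)$-components of these transformations do not involve $Z'$, the family $\mathcal{F}_0$ automatically respects $Z$. Pulling $F$ through $\mathcal{F}_0$ yields $F = Z^{k_0}\bigl(X^\alpha Y^\beta V + Z\, R(X,Y,Z)\bigr)$ for some remainder $R$.

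The heart of the proof is then to arrange that $X^\alpha Y^\beta$ divides the whole of $F_1 = X^\alpha Y^\beta V + Z R$, and not merely its $Z^0$-coefficient, by further admissible transformations almost respecting $Z$. I would analyse $\mathrm{Supp}(F_1)_{\min}$: it always contains $(\alpha, \beta, 0)$, and every other minimal point has positive $Z$-exponent. For each such additional minimal point I apply a blow-up of type $\pi_{m+n+1,j}^\lambda$ (respectively $\pi_{m+1,j}^\lambda$ if $Z$ is generalised) with $\lambda \in \mathbb{R}$ drawn from the finitely many directions determined by $\mathrm{Supp}(F_1)_{\min}$; these blow-ups respect $Z$ and strictly decrease the invariant $w$. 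Branches that cannot be closed by any finite $\lambda$ are completed by the single permitted terminal blow-up at infinity $\pi_{m+n+1,j}^{\pm\infty}$ (respectively $\pi_{m+1,j}^{\infty}$), which is the "almost" in "almost respecting": it converts $Z$ into a new generalised variable $x'_{m+1}$, so an unrestricted monomialisation provided by the inductive hypothesis for smaller $n$, or directly by \cite{rsw}, finishes the branch.

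The principal obstacle is termination of the second phase: without a Weierstrass Preparation Theorem the classical degree-reduction argument is unavailable, and one must construct an invariant on $\mathrm{Supp}(F_1)_{\min}$ that strictly decreases under each respecting blow-up, relying only on the well-ordering of the good-set projections and on the good-set closure properties recorded immediately after Definition~\ref{def: elem family}. I expect the correct invariant to count, with multiplicities weighted by the Newton-polyhedron edges emanating from $(\alpha, \beta, 0)$, the extra minimal points of $\mathrm{Supp}(F_1)$ above the hyperplane $\{Z\text{-exponent}=0\}$; the fact that $Z$ is untouched by respecting transformations is what both makes the analysis cleaner and forces the occasional recourse to the terminal at-infinity step.
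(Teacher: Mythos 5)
Your second phase is where the entire difficulty of the theorem lives, and as sketched it cannot terminate. The only tools you allow yourself there are ramifications and blow-up charts, which translate $Z$ by a \emph{constant} $\lambda$ times a monomial; but take $F=\left(Z-g\left(X\right)\right)^{2}$ with $g\in\widehat{\mathcal{C}}_{m}$ a series with infinitely many terms (with generalised exponents the support may even accumulate). The chart $\pi^{\lambda}$ along the strict-transform direction reproduces a germ of exactly the same shape, so no finite composition of the charts you allow makes $F$ normal, and no invariant read off $\mathrm{Supp}\left(F_{1}\right)_{\min}$ can strictly decrease to zero along that branch; remember also that by Definition \ref{def:elem transf}/\ref{def: elem family} an admissible family must contain \emph{all} charts $\lambda\in\mathbb{R}\cup\left\{ \pm\infty\right\} $, so you cannot restrict to ``finitely many directions determined by the support''. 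What resolves such germs is a Tschirnhausen translation $\tau_{H}$ of $Z$ by a \emph{function} $H\left(X,Y\right)$, which never appears in your proposal, and making that move available (and making generic finite $\lambda$ drop the order, and making the at-infinity charts terminal) is precisely what the paper's Proposition \ref{prop: finite pres} is for: a finite presentation $F=H_{1}\left(Z^{\alpha_{1}}U_{1}+H_{2}Z^{\alpha_{2}}U_{2}+\cdots+H_{d}Z^{\alpha_{d}}U_{d}\right)$ with the $H_{i}$ normal and the $U_{i}$ units, which in turn rests on the quasi-Noetherianity Lemma \ref{lem: quasi noeth} (formal Weierstrass division, blow-up height). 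Your step normalises only the single leading coefficient $a_{k_{0}}$ and controls none of the (possibly infinitely many) remaining coefficients; knowing the finitely many minimal support points does not give the unit structure needed after pulling back, and your admitted ``I expect the correct invariant to count\dots'' is exactly the gap. The paper's actual decreasing quantities are the order of regularity $\alpha_{1}$ (standard $Z$, after killing the subleading coefficient $A_{1}$ by $\tau_{H}$) and the presentation order $d$ (generalised $Z$, via the ramification and the choice $i_{0}$ in condition (\#)).

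There is a second structural flaw: your terminal step violates ``almost respecting $Z$''. By definition, in each branch the blow-up at infinity involving $Z$ (i.e. $\pi_{m+1,j}^{\infty}$ or $\pi_{m+n+1,j}^{\pm\infty}$) must be the \emph{last} elementary transformation; nothing may be composed after it. Your plan to follow it with an ``unrestricted monomialisation'' of the resulting function in the new generalised variable produces a family that does not almost respect $Z$, and such a family is useless for deducing the Main Theorem, since the resulting cells are no longer graphs or bands in $z$ over $\mathcal{A}$-bases in $\left(x,y\right)$. The paper avoids this by arranging the presentation so that the at-infinity charts \emph{immediately} yield a normal series (in the standard case $G=Z^{\alpha_{1}}V$ with $V$ a unit; in the generalised case the bracket is a unit thanks to (\#)); any correct proof must build in that feature rather than postpone work past the at-infinity chart.
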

Before proving the above theorem, we show how it implies the Main
Theorem. Since we want to keep track of standard and generalised variables,
we will change the notation and prove the Main Theorem for a germ
$f\left(x,y,z\right)\in\mathcal{C}_{m,n,1}$, where $y$ is now an
$n$-tuple of variables and $z$ is a single variable.
\begin{proof}
[Proof of the Main Theorem]Let $f\left(x,y,z\right)\in\mathcal{C}_{m,n,1}$.
By Theorem \ref{thm: vertical monomialisation} and the quasianalyticity
property, after some admissible family almost respecting $z$, the
germ of $f$ is normal (i.e. it is the product of a monomial by a
unit of $\mathcal{C}$). The proof is by induction on the pairs $\left(d,l\right)$,
where $d=m+n+1$ is the total number of variables and $l$ is the
minimal length of an admissible monomialising family for $f$. 

If $d=0$ or $l=0$ then there is nothing to prove. So we may suppose
$d,l>0$.

Let $\mathcal{F}$ be a monomialising family for $f$ of length $l$.
Note that, for every $\rho\in\mathcal{F}$, we may partition the domain
of $\rho$ (which is either of the form $I_{m_{\rho}+1,n_{\rho},r_{\rho}}$
or $I_{m_{\rho},n_{\rho}+1,r_{\rho}}$, for some $m_{\rho},n_{\rho}$
such that $m_{\rho}+n_{\rho}=m+n$) into a finite union of sub-quadrants
$Q_{\rho,j}$ (i.e. sets of the form $B_{1}\times\ldots\times B_{m+n+1}$,
where $B_{i}$ is either $\left\{ 0\right\} $, or $\left(-r_{\rho,i},0\right)$,
or $\left(0,r_{\rho,i}\right)$) such that $f\circ\rho$ has constant
sign on $Q_{\rho,j}$. By a classical compactness argument (see for
example \cite[p. 4406]{vdd:speiss:gen}), there exists a finite subfamily
$\mathcal{F}_{0}\subseteq\mathcal{F}$ and an open neighbourhood $W\subseteq\mathbb{R}^{m+n+1}$
of the origin such that $W\cap\text{dom}\left(f\right)=\bigcup_{\rho\in\mathcal{F}_{0}}\bigcup_{j\leq J}\rho\left(Q_{\rho,j}\right)$
, for some $J\in\mathbb{N}$. Notice that, if $A,B$ are $\mathcal{A}$-cells,
then $A\cap B$ and $A\setminus B$ are finite disjoint unions of
$\mathcal{A}$-cells. 

Let $\mathcal{F}_{1}$ be an elementary family and $\mathcal{F}_{2}$
be an admissible family of length $<l$ such that for every $\rho\in\mathcal{F}_{0}$
there exist $\nu_{\rho}\in\mathcal{F}_{1}$ and $\rho'\in\mathcal{F}_{2}$
such that $\rho=\nu_{\rho}\circ\rho'$. Notice that $\mathcal{F}_{2}$
necessarily almost respects $z$. We will first consider the admissible
transformations such that $\nu_{\rho}$ respects $z$. Let $S_{\rho}$
be the singular set of $\nu_{\rho}$ defined in Lemma \ref{lem: singular set}.
If $S_{\rho}\not=\emptyset$, then the set $T_{\rho}=\nu_{\rho}\left(S_{\rho}\times\mathbb{R}\right)$
is the common zeroset of two variables. By Condition 1 in \ref{emp:properties of the morph},
the germ of $f\restriction T_{\rho}$ belongs to the collection $\mathcal{C}$
and depends on less than $d$ variables. Hence the inductive hypothesis
holds and the theorem is proved for $f\restriction T_{\rho}$. Notice
that, by \ref{vuoto: functions}, the complement in $\text{dom}\left(f\right)$
of the union of all $T_{\rho}$ such that $\nu_{\rho}$ respects $z$
can be partitioned into a finite union of domains $I\subseteq\text{dom}\left(f\right)$
such that, possibly up to some reflection, the germ of $f\restriction I$
belongs to the collection $\mathcal{C}$. It therefore suffices to
prove the theorem for $f\restriction I$.

If $\nu_{\rho}$ is either $\pi_{m+1,j}^{\infty}$ or $\pi_{m+n+1,j}^{\pm\infty}$,
then necessarily $\rho=\nu_{\rho}$ and clearly for every sub-quadrant
$Q$ the set $\nu_{\rho}\left(Q\right)$ is an $\mathcal{A}$-cell. 

Otherwise, $\nu_{\rho}$ respects $z$. We rename $\nu_{\rho}=\nu$
and $S_{\rho}=S$. In order to avoid a cumbersome notation, we will
only treat the case, as in Definition \ref{def: vertical}, of the
form $\nu:\left(x',y',z'\right)\mapsto\left(\nu_{0}\left(x',y'\right),\nu_{m+n+1}\left(x',y',z'\right)\right)$
, i.e. where both $z'$ and $z$ are standard variables (the other
cases can be treated analogously). By induction on $l$, the theorem
applies to $f\circ\nu\restriction\text{dom}\left(\nu\right)\setminus\left(S\times\mathbb{R}\right)$.
Let $A$ be one of the $\mathcal{A}$-cells obtained thus. Without
loss of generality, we may suppose that $A$ is of the form $\left\{ \left(x',y',z'\right):\ \left(x',y'\right)\in C,\ z'*t\left(x',y'\right)\right\} $,
where $*\in\left\{ =,<\right\} $, $C$ is an $\mathcal{A}$-base
and $t$ is an $\mathcal{A}$-term. Using the fact that $\nu_{0}$
is invertible and the map $z'\mapsto z=\nu_{m+n+1}\left(x',y',z'\right)$
is monotonic, we obtain that $\nu\left(A\right)=\left\{ \left(x,y,z\right):\ \left(x,y\right)\in\nu_{0}\left(C\right),\ z*\nu_{m+n+1}\left(\nu_{0}^{-1}\left(x,y\right),t\left(\nu_{0}^{-1}\left(x,y\right)\right)\right)\right\} $,
and it is easy to see that $\nu_{0}\left(C\right)$ is an $\mathcal{A}$-base.
Since $f$ has constant sign on $\nu\left(A\right)$, this concludes
the proof of the theorem.
\end{proof}

\section{Proof of Theorem \ref{thm: vertical monomialisation}}

Let $\left(X,Y\right)=\left(X_{1},\ldots,X_{m},Y_{1},\ldots,Y_{n}\right)$
and $F\left(X,Y,Z\right)\in\widehat{\mathcal{C}}_{m,n,1}$. The proof
of Theorem \ref{thm: vertical monomialisation} is by induction on
$m+n$, the case $m+n=0$ being trivial. Throughout the proof we will
use the following easy consequence of the inductive hypothesis (see
\cite[Lemma 2.2]{rsw} and \cite[Lemma 4.7]{bm_semi_subanalytic};
the proof for the case of standard variables extends trivially to
the case of mixed variables).
\begin{void}
\label{empty: ind hyp}\emph{(Inductive Hypothesis)} Let $G_{1}\left(X,Y\right),\ldots,G_{s}\left(X,Y\right)\in\widehat{\mathcal{C}}_{m,n}$.
Then, after admissible family, the $G_{i}$ are normal and linearly
ordered by division.
\end{void}
The first stage of the proof consists in giving a suitable presentation
of $F$ with respect to $Z$.
\begin{defn}
\label{def: finite pres}We say that $F\in\widehat{\mathcal{C}}_{m,n,1}$
admits a finite presentation of order $d$ if there are $\alpha_{1}>\ldots>\alpha_{d}\in\mathbb{K}$,
$H_{1},\ldots,H_{d}\in\widehat{\mathcal{C}}_{m,n}$, which are normal,
and units $U_{1},\ldots,U_{d}\in\left(\widehat{\mathcal{C}}_{m,n,1}\right)^{\times}$
such that $F\left(X,Y,Z\right)=H_{1}\left(X,Y\right)G\left(X,Y,Z\right)$,
where 
\[
G\left(X,Y,Z\right)=Z^{\alpha_{1}}U_{1}\left(X,Y,Z\right)+H_{2}\left(X,Y\right)Z^{\alpha_{2}}U_{2}\left(X,Y,Z\right)+\ldots+H_{d}\left(X,Y\right)Z^{\alpha_{d}}U_{d}\left(X,Y,Z\right).
\]
\end{defn}
\begin{prop}
\label{prop: finite pres}Suppose that the Inductive Hypothesis \ref{empty: ind hyp}
holds. Then $F$ admits a finite presentation of some order $d\in\mathbb{N}$,
after admissible family respecting the variable $Z$ (in fact, the
admissible transformations required act as the identity on $Z$).
\end{prop}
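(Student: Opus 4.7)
I would expand $F(X,Y,Z)=\sum_{\alpha\in S}c_\alpha(X,Y)Z^\alpha$ along the well-ordered $Z$-support $S$, and exploit the goodness of $\mathrm{Supp}(F)\subset[0,\infty)^{m+n+1}$: the set $M=\mathrm{Supp}(F)_{\min}$ of minimal elements is finite, and I would label its distinct $Z$-coordinates $\alpha_1>\alpha_2>\ldots>\alpha_d$. These will become the exponents appearing in the finite presentation.

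Next I would apply the Inductive Hypothesis \ref{empty: ind hyp} to the finite family of coefficients $\{c_{\alpha_1},\ldots,c_{\alpha_d}\}$ via an admissible family acting only on $(X,Y)$ (hence identity on $Z$, the strongest possible sense of respecting $Z$). After this, each $c_{\alpha_i}=m_i u_i$ with $m_i=X^{\mu_i}Y^{\nu_i}$ a monomial and $u_i\in(\widehat{\mathcal{C}}_{m,n})^{\times}$, and the $m_i$ are linearly ordered by division; after relabelling, $m_1\mid m_2\mid\ldots\mid m_d$. The pairwise incomparability of the elements of $M$ in the componentwise order then forces $\alpha_1>\alpha_2>\ldots>\alpha_d$, matching the required pattern. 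Should the transformation disturb the minimal set, I would iterate; since the $Z$-support is invariant under $(X,Y)$-transformations, the critical $Z$-coordinates are confined to a fixed well-ordered set and the process terminates.

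The combinatorial core is then to bin $\mathrm{Supp}(F)$ (after transformation): for each $(\mu,\nu,\alpha)$, I pick an $i\in\{1,\ldots,d\}$ with $(\mu,\nu,\alpha)\geq(\mu_i,\nu_i,\alpha_i)$ componentwise, existence being guaranteed by the minimality of $M$. The bin-$i$ sub-sum formally factors as $m_i Z^{\alpha_i}U_i'$, where the series $U_i'$ has nonzero constant term $u_i(0,0)$ and is therefore a unit. Setting $H_1:=m_1$, $H_i:=m_i/m_1$ for $i\geq 2$ (each a normal monomial), and $U_i:=U_i'$ gives the desired decomposition $F=H_1\bigl(Z^{\alpha_1}U_1+H_2Z^{\alpha_2}U_2+\ldots+H_d Z^{\alpha_d}U_d\bigr)$.

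The main obstacle is of a different nature from this combinatorics: one must show that each $U_i$ actually lies in $\widehat{\mathcal{C}}_{m,n,1}$, not merely in $\mathbb{R}\llbracket X^{*},Y,Z\rrbracket$. This is where the truncation condition (Condition 5 in \ref{emp:properties of the morph}, which applies to $Z$ whether it is standard or generalised thanks to Remark \ref{rem: taylor}) and monomial division (Condition 2) enter. I plan to build the $U_i$ recursively inside $\widehat{\mathcal{C}}_{m,n,1}$ by truncating $F$ at $Z$-exponent $\alpha_1$, extracting $m_1 Z^{\alpha_1}$ from the tail via monomial division to obtain $U_1$, and then iterating on the truncated remainder with the shorter critical list $\alpha_2>\ldots>\alpha_d$.
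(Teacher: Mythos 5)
There is a genuine gap, and it sits exactly where the paper needs its key lemma. You apply the Inductive Hypothesis \ref{empty: ind hyp} only to the \emph{finitely many} coefficients $c_{\alpha_1},\ldots,c_{\alpha_d}$ attached to the $Z$-coordinates of $\mathrm{Supp}(F)_{\min}$, and then assert that every point of the support can be binned over some $(\mu_i,\nu_i,\alpha_i)$. But the $Z$-support is in general an infinite well-ordered set, so there are infinitely many other coefficients $c_\alpha$, and the admissible family produced by \ref{empty: ind hyp} is tailored to the special ones only: after the blow-ups and ramifications it performs, nothing guarantees that the monomials occurring in the transformed $c_\alpha$ ($\alpha$ non-special) are divisible by some $m_i$ with $\alpha_i\le\alpha$. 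The domination-by-minimal-elements property you invoke holds for the \emph{original} support, not for the support after the transformation, and the $(\mu_i,\nu_i)$ are corners of whole transformed coefficients, not elements of $M$. Your fallback (``should the transformation disturb the minimal set, I would iterate \ldots the process terminates'') has no decreasing measure behind it. Controlling \emph{all} the coefficients simultaneously is precisely the content of the paper's Lemma \ref{lem: quasi noeth}: after admissible family the $\mathbb{R}\left\llbracket X^{*},Y\right\rrbracket$-ideal generated by the full family $\{F_\alpha\}$ is finitely generated (proved via the blow-up height of van den Dries--Speissegger and formal Weierstrass division), and only then can \ref{empty: ind hyp} be applied to generators so that one coefficient $F_{\alpha_1}$ divides \emph{every} $F_\alpha$.

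A second, independent problem is your pairing claim that after relabelling $m_1\mid m_2\mid\ldots\mid m_d$ one gets $\alpha_1>\ldots>\alpha_d$ ``matching the required pattern''. Incomparability of the elements of $M$ is not preserved by the normalizing blow-ups, and the pairs $(\mu_i,\nu_i,\alpha_i)$ were never elements of $M$ to begin with. Concretely, for $F=X_1Z+X_2Z^2$ the minimal elements give special exponents $2>1$ with coefficients $X_2,X_1$; the chart $X_2\mapsto X_1X_2$ (needed to order them by division) yields monomials $X_1X_2$ at exponent $2$ and $X_1$ at exponent $1$, so the \emph{smaller} monomial sits at the \emph{smaller} exponent, and factoring out $X_1$ leaves $X_2\cdot(\text{unit})$, not a unit, on the top power of $Z$ --- which violates the shape demanded by Definition \ref{def: finite pres}. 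The paper avoids this by a different arrangement: at each stage it extracts the term $F_{\alpha_1}Z^{\alpha_1}U$ where $F_{\alpha_1}$ divides all coefficients (so the quotients $Q_\alpha$ lie in $\widehat{\mathcal{C}}_{m,n}$ by monomial division and the tail $1+\sum_{\alpha>\alpha_1}Q_\alpha Z^{\alpha-\alpha_1}$ is a unit), then recurses on the truncation $\sum_{\alpha<\alpha_1}F_\alpha Z^\alpha$ (Condition 5), the decreasing exponents terminating by well-ordering of the $Z$-support. Your use of truncation and monomial division for class membership is in the right spirit, but without the finite-generation step and the ``divides everything'' choice of $F_{\alpha_1}$, neither the unit property nor the required ordering of the presentation follows.
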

The ring $\mathbb{R}\left\llbracket X^{*},Y\right\rrbracket $ is
clearly not Noetherian. However, the next lemma provides a finiteness
property which is enough for our purposes. The proof takes inspiration
from \cite[Theorem 6.3.3]{horm}. 
\begin{lem}
\label{lem: quasi noeth}Let $\mathcal{G}=\left\{ F_{\alpha}\left(X,Y\right):\ \alpha\in A\right\} \subseteq\widehat{\mathcal{C}}_{m,n}$
be a family with good total support. Then,\smallskip{}

\noindent \begin{flushleft}
a) after admissible family, there are $\beta\in[0,\infty)^{m}$ and
a collection $\left\{ G_{\alpha}\left(X,Y\right):\ \alpha\in A\right\} \subseteq\widehat{\mathcal{C}}_{m,n}$
such that $\forall\alpha\in A,\ F_{\alpha}\left(X,Y\right)=X^{\beta}G_{\alpha}\left(X,Y\right)$
and $G_{\alpha_{0}}\left(0,Y\right)\not\equiv0$, for some $\alpha_{0}\in A$;
\par\end{flushleft}

\noindent \begin{flushleft}
b) for every $d\in\mathbb{N}$, after admissible family, the $\mathbb{R}\left\llbracket X^{*},Y\right\rrbracket $-module
generated by the tuples $\left\{ \left(F_{\alpha_{1}},\ldots,F_{\alpha_{d}}\right):\ \alpha_{1},\ldots,\alpha_{d}\in A\right\} $
is finitely generated.
\par\end{flushleft}

The numbers $m,n$ may change after admissible transformation.\end{lem}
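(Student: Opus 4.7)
For part (a), the plan uses goodness of $\mathrm{Supp}(\mathcal{G})$ to single out its finitely many minimal exponents $\{(\gamma_i, \delta_i) : i = 1, \ldots, k\}$, and to apply the Inductive Hypothesis \ref{empty: ind hyp} to the resulting finite family of monomial germs $X^{\gamma_i}Y^{\delta_i} \in \widehat{\mathcal{C}}_{m,n}$. After an admissible family $\rho$, these become normal and linearly ordered by division; let $X^\beta Y^{\epsilon_0} U_0$ be the smallest normal form, so that $X^\beta Y^{\epsilon_0}$ divides every other transformed normal form. Every $(\gamma, \delta) \in \mathrm{Supp}(\mathcal{G})$ dominates some $(\gamma_i, \delta_i)$, hence $X^\gamma Y^\delta \circ \rho$ is divisible by $X^\beta$; summing over $\mathrm{Supp}(F_\alpha)$ and iterating Monomial Division (Condition 2 of \ref{emp:properties of the morph}) yields $F_\alpha \circ \rho = X^\beta G_\alpha$ with $G_\alpha \in \widehat{\mathcal{C}}_{m,n}$. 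The monomial $X^\beta Y^{\epsilon_0}$ occurs with nonzero coefficient in the transform of any $F_{\alpha_0}$ whose original support contained $X^{\gamma_1}Y^{\delta_1}$, and the strict divisibility ordering of the other transformed normal forms prevents any cancellation; hence $G_{\alpha_0}(0, Y) \not\equiv 0$.

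For part (b), I would induct on $d$, in the spirit of \cite[Theorem 6.3.3]{horm}. For $d = 1$: part (a) factorises $\langle F_\alpha \circ \rho \rangle = X^\beta \langle G_\alpha \rangle$, so it suffices to finitely generate $\langle G_\alpha \rangle$; since $G_{\alpha_0}(0, Y) \not\equiv 0$, a generic linear transformation $L_{i,c}$ of the $Y$-variables makes $G_{\alpha_0}(0, \ldots, 0, Y_n) = c Y_n^p + O(Y_n^{p+1})$ with $c \ne 0$. A Weierstrass-free substitute for preparation, assembled from Tschirnhausen translations, iterated monomial division, and truncation (Condition 5 of \ref{emp:properties of the morph}), would then reduce each $G_\alpha$ modulo $G_{\alpha_0}$ to a remainder of controlled $Y_n$-degree $<p$ and bounded $X_m$-support. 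The remainders live effectively in smaller dimension, so the lemma applies inductively on $m+n$ to produce a finite generating set, which lifts back. The step $d \to d+1$ is a standard short-exact-sequence manoeuvre: a finite generating set for the ideal of first coordinates (supplied by the $d=1$ case) lets one reduce any $(d+1)$-tuple to a tuple with vanishing first coordinate, and the inductive hypothesis on $d$ handles the remaining $d$-tuple.

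The principal obstacle is the unavailability of Weierstrass Preparation in generalised quasianalytic classes (see \cite{parusinski_rolin:weierstrass_quasianalytic}), which is the cornerstone of H\"ormander's analytic argument. Its role here must be played by a careful interplay of Tschirnhausen translations (regularising one standard variable), monomial division (clearing common $X$-monomials from part (a)), and truncation (indispensable when the set $\mathbb{A}$ of admissible exponents is strictly larger than $\mathbb{N}$). Orchestrating these three tools so that the double induction on $(d, m+n)$ actually closes, while correctly tracking the way admissible transformations may alter $m$ and $n$, is the delicate point of the argument.
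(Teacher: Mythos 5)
Your part (a) has a genuine gap at its final step. You normalise the finitely many minimal monomials of the full $(X,Y)$-support via the Inductive Hypothesis \ref{empty: ind hyp} (the divisibility-by-$X^{\beta}$ part of this is fine), but then claim that $G_{\alpha_{0}}(0,Y)\not\equiv0$ because ``the strict divisibility ordering of the other transformed normal forms prevents any cancellation''. That is not justified: being linearly ordered by division is not a strict ordering (two incomparable minimal monomials such as $X_{j}^{2}$ and $Y_{i}^{2}$ are both sent by $\pi_{m+i,j}^{\lambda}$, $\lambda\neq0$, to $X_{j}^{2}$ times a unit), and, worse, infinitely many \emph{non-minimal} monomials of $\mathrm{Supp}(\mathcal{G})$ can collapse onto the same $X$-part $\beta$ under such charts, so the coefficient of $X^{\beta}$ in $F_{\alpha}\circ\rho$ is an infinite sum of $Y$-series whose non-vanishing you never establish; nothing in \ref{empty: ind hyp} controls it. The paper avoids this entirely: it views $\mathcal{G}$ inside $\mathbb{B}\left\llbracket X^{*}\right\rrbracket $ with $\mathbb{B}=\mathbb{R}\left\llbracket Y\right\rrbracket $, so only $X$-monomials matter, and runs the blow-up-height induction of \cite[4.10--4.14]{vdd:speiss:gen} on the pairs $\left(m,b_{X}\left(\mathcal{G}_{\min}\right)\right)$ until $\mathcal{G}_{\min}$, computed from the \emph{actual} support of the transformed family, is the single monomial $X^{\beta}$; then $\beta$ lies in the $X$-support of some $F_{\alpha_{0}}$, which is precisely $G_{\alpha_{0}}\left(0,Y\right)\not\equiv0$. (This also keeps the lemma independent of \ref{empty: ind hyp}, which the paper only invokes afterwards, in Proposition \ref{prop: finite pres}.)

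In part (b) you misidentify the obstacle. Statement (b) concerns the $\mathbb{R}\left\llbracket X^{*},Y\right\rrbracket $-module generated by the tuples, so the quotients and remainders need not lie in $\widehat{\mathcal{C}}$ at all: the \emph{formal} Weierstrass division for generalised power series (\cite[4.17]{vdd:speiss:gen}) is available, and that is exactly what the paper uses after part (a) and a linear transformation making $G_{\alpha_{0}}$ regular in $Y_{n}$. There is no need for your ``Weierstrass-free substitute'' built from Tschirnhausen translations, monomial division and truncation (Condition 5 is a property of the class $\mathcal{C}$ and is irrelevant to this formal statement; the failure of Weierstrass preparation in quasianalytic classes only matters at the level of functions, not here). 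The genuinely delicate point, which your sketch does not address, is that the family of remainder coefficients $B_{\alpha,j}$ must have \emph{good total support} for the lemma to be applied inductively in $m+n-1$ variables; the paper quotes \cite[p. 4390]{vdd:speiss:gen} for the containment of this support in the good set $\Sigma\mathrm{Supp}\left(\mathcal{G}\right)$, and your ``bounded $X_{m}$-support'' does not supply it. Your reduction from general $d$ to $d=1$ along H\"ormander's lines does match the paper.
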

\begin{proof}
For the proof of a), we view $\mathcal{G}$ as a subset of $\mathbb{B}\left\llbracket X^{*}\right\rrbracket $,
with $\mathbb{B}=\mathbb{R}\left\llbracket Y\right\rrbracket $. In
\cite[4.11]{vdd:speiss:gen} the authors define the \emph{blow-up
height} of a finite set of monomials, denoted by $b_{X}$. It follows
from the definition of $b_{X}$ that if $b_{X}\left(\mathcal{G}_{\mathrm{min}}\right)=\left(0,0\right)$,
then there exists $\beta\in[0,\infty)^{m}$ such that $\mathcal{G}_{\mathrm{min}}=\left\{ X^{\beta}\right\} $,
which is what we want. The proof of this step is by induction on the
pairs $\left(m,b_{X}\left(\mathcal{G}_{\text{min}}\right)\right)$,
ordered lexicographically. If $m=0$, there is nothing to prove. If
$m=1$, then $b_{X}\left(\mathcal{G}_{\mathrm{min}}\right)=\left(0,0\right)$. 

Hence we may assume that $m>1$ and $b_{X}\left(\mathcal{G}_{\mathrm{min}}\right)\not=\left(0,0\right)$.
It follows from the proof of \cite[Proposition 4.14]{vdd:speiss:gen}
that there are $i,j\in\left\{ 1,\ldots,m\right\} $ and suitable ramifications
$r_{i}^{\gamma},\ r_{j}^{\delta}$ of the variables $X_{i}$ and $X_{j}$
such that, after the admissible transformations $\rho_{0}:=r_{i}^{\gamma}\circ r_{j}^{\delta}\circ\mathfrak{\pi}_{i,j}^{0}$
and $\rho_{\infty}:=r_{i}^{\gamma}\circ r_{j}^{\delta}\circ\pi_{i,j}^{\infty}$,
the blow-up height $b_{X}$ of $\mathcal{G}_{\text{min}}$ has decreased
(to see this, consider $\alpha_{i},\beta_{j}$ in the proof of \cite[Lemma 4.10]{vdd:speiss:gen}
and perform the mentioned ramifications with $\gamma=\beta_{j}$ and
$\delta=\alpha_{i}$). Moreover, for every $\lambda\in\left(0,\infty\right)$,
after the admissible transformation $\rho_{\lambda}:=r_{i}^{\gamma}\circ r_{j}^{\delta}\circ\pi_{i,j}^{\lambda}$,
the series in the family $\mathcal{G}$ have one less generalised
variable and one more standard variable, so $m$ has decreased. Since
admissible transformations preserve having good total support, the
inductive hypothesis applies and we obtain the required conclusion.\bigskip{}

The proof of b) is by induction on the pairs $\left(m+n,d\right)$,
ordered lexicographically. Arguing by induction on $d$ as in \cite[Lemma 6.3.2]{horm},
it is enough to prove the case $d=1$. If $m+n=1$ then, since $\mathcal{G}$
has good total support, the ideal generated by $\mathcal{G}$ is principal.
Hence suppose that $m+n>1$. Recall that, by part a) of this lemma,
there are $\beta\in[0,\infty)^{m}$ and a collection $\left\{ G_{\alpha}\left(X,Y\right):\ \alpha\in A\right\} \subseteq\widehat{\mathcal{C}}_{m,n}$
such that $\forall\alpha\in A,\ F_{\alpha}\left(X,Y\right)=X^{\beta}G_{\alpha}\left(X,Y\right)$
and $G_{\alpha_{0}}\left(0,Y\right)\not\equiv0$, for some $\alpha_{0}\in A$.
After a linear transformation $L_{n,c}$, we may suppose that $G_{\alpha_{0}}$
is regular of some order $d$ in the variable $Y_{n}$.

Let $\hat{Y}=\left(Y_{1},\ldots,Y_{n-1}\right)$. By the formal Weierstrass
Division for generalised power series (see \cite[4.17]{vdd:speiss:gen}),
for every $\alpha\in A$ there are $Q_{\alpha}\in\mathbb{R}\left\llbracket X^{*},Y\right\rrbracket $
and $B_{\alpha,0},\ldots,B_{\alpha,d-1}\in\mathbb{R}\left\llbracket X^{*},\hat{Y}\right\rrbracket $
such that $G_{\alpha}=G_{\alpha_{0}}Q_{\alpha}+R_{\alpha}$, where
$R_{\alpha}\left(X,Y\right)=\sum_{i=0}^{d-1}B_{\alpha,i}\left(X,\hat{Y}\right)Y_{n}^{i}$.
It is proven in \cite[p. 4390]{vdd:speiss:gen} that the total support
of the collection $\left\{ B_{\alpha,j}:\ \alpha\in A,\ j=0,\ldots,d-1\right\} $
is contained in the good set $\Sigma\text{Supp}\left(\mathcal{G}\right)$
of all finite sums (done component-wise) of elements of $\text{Supp}\left(\mathcal{G}\right)$.
Hence, by the inductive hypothesis on the total number of variables,
after admissible family acting on $\left(X,\widehat{Y}\right)$, the
$\mathbb{R}\left\llbracket X^{*},\hat{Y}\right\rrbracket $-module
generated by $\mathcal{B}=\left\{ B_{\alpha}=\left(B_{\alpha,0},\ldots,B_{\alpha,d-1}\right):\ \alpha\in A\right\} $
is finitely generated. Therefore, there are $\alpha_{1},\ldots,\alpha_{q}\in A$
and for all $\alpha\in A$ there are $C_{\alpha,1},\ldots,C_{\alpha,q}\in\mathbb{R}\left\llbracket X^{*},\hat{Y}\right\rrbracket $
such that $B_{\alpha}=\sum_{j=1}^{q}C_{\alpha,j}B_{\alpha_{j}}$.
Putting everything together, we obtain that, for every $\alpha\in A$,
\[
F_{\alpha}=\left(Q_{\alpha}-\sum_{j=1}^{q}C_{\alpha,j}Q_{\alpha_{j}}\right)F_{\alpha_{0}}+\sum_{j=1}^{q}C_{\alpha,j}F_{\alpha_{j}}.
\]

\end{proof}

\begin{proof}
[Proof of Proposition \ref{prop: finite pres}]Write $F\left(X,Y,Z\right)=\sum_{\alpha\in A}F_{\alpha}\left(X,Y\right)Z^{\alpha}$
and consider the family $\mathcal{G}=\left\{ F_{\alpha}\left(X,Y\right):\ \alpha\in A\right\} $,
which is contained in $\widehat{\mathcal{C}}_{m,n}$ by Conditions
2 and 5 in \ref{emp:properties of the morph}. Note that $A\subseteq[0,\infty)$
is a well ordered set and $\mathcal{G}$ has good total support.

By Lemma \ref{lem: quasi noeth}, after admissible family acting on
$\left(X,Y\right)$, the $\mathbb{R}\left\llbracket X^{*},Y\right\rrbracket $-ideal
generated by $\mathcal{G}$ is finitely generated. Hence we can apply
the Inductive Hypothesis \ref{empty: ind hyp} simultaneously to the
generators and obtain that, after admissible family acting on $\left(X,Y\right)$,
the generators are normal and linearly ordered by division. Hence,
there is $\alpha_{1}\in A$ and for all $\alpha\in A$ there is $Q_{\alpha}\in\mathbb{R}\left\llbracket X^{*},Y\right\rrbracket $
such that $F_{\alpha}=F_{\alpha_{1}}\cdot Q_{\alpha}$. Notice that,
since $F_{\alpha_{1}}$ is normal, by monomial division $Q_{\alpha}\in\widehat{\mathcal{C}}_{m,n}$
(by Remark \ref{rem: taylor}, the inverse of a unit belonging to
$\hat{\mathcal{C}}$ also belongs to $\hat{\mathcal{C}}$). This allows
us to write 
\[
F\left(X,Y,Z\right)=\sum_{\alpha<\alpha_{1}}F_{\alpha}\left(X,Y\right)Z^{\alpha}+F_{\alpha_{1}}\left(X,Y\right)Z^{\alpha_{1}}U\left(X,Y,Z\right),
\]
where $U\left(X,Y,Z\right)=1+\sum_{\alpha>\alpha_{1}}Q_{\alpha}\left(X,Y\right)Z^{\alpha-\alpha_{1}}$.
The series $G\left(X,Y,Z\right)=\sum_{\alpha<\alpha_{1}}F_{\alpha}\left(X,Y\right)Z^{\alpha}$
belongs to $\widehat{\mathcal{C}}_{m,n,1}$ by Condition 5 in \ref{emp:properties of the morph},
hence $U\in\left(\widehat{\mathcal{C}}_{m,n,1}\right)^{\times}$.
We repeat the above argument for $G$. This procedure will provide,
after admissible family acting on $\left(X,Y\right)$, a decreasing
sequence $\alpha_{1}>\alpha_{2}>\ldots$ which is necessarily finite
(say, of length $d$), since $A$ is well-ordered. Now it is enough
to rename $H_{i}:=Q_{\alpha_{i}}$ for $i=1,\ldots,d$ and factor
out $H_{1}$ to obtain the required finite presentation.

\end{proof}
We can now finish the proof of Theorem \ref{thm: vertical monomialisation}
by showing how to reduce the order of a finite presentation for $F$.
\begin{proof}
[Proof of theorem \ref{thm: vertical monomialisation}]In what follows,
up to suitable reflections, there is no harm in considering the variables
$\left(X,Y\right)$ as generalised, hence, to simplify the notation,
we will suppose $Y=\emptyset$.

\bigskip{}

Suppose first that $F\in\widehat{\mathcal{C}}_{m,1}$, i.e. $Z$ is
a standard variable. By Proposition \ref{prop: finite pres}, we may
suppose that $F$ admits a finite presentation as in Definition \ref{def: finite pres}.
Since the exponents $\alpha_{i}$ are in $\mathbb{N}$, we have that
$G$ is regular of order $\alpha_{1}$ in the variable $Z$.

If $\alpha_{1}=1$, then we perform the Tschirnhausen transformation
translating $Z$ by the solution to the implicit function problem
$G=0$, and obtain that $F$ is normal.

Suppose that $\alpha_{1}>1$. We follow, up to suitable reflections
and ramifications, the algorithm for decreasing the order of regularity
in the proof of \cite[Theorem 2.5]{rsw}, which we briefly summarise
(the details can be found in \cite[Section 4.2.2]{martin_sanz_rolin_local_monomialization}).
By the Taylor formula, there are series $A_{1},\ldots,A_{d}\in\widehat{\mathcal{C}}_{m}$,
with $A_{i}\left(0\right)=0$, and a unit $U\in\left(\widehat{\mathcal{C}}_{m,1}\right)^{\times}$
such that
\[
G\left(X,Z\right)=A_{d}\left(X\right)+\ldots+A_{1}\left(X\right)Z^{\alpha_{1}-1}+U\left(X,Z\right)Z^{\alpha_{1}}.
\]
After a Tschirnhausen translation, we may assume that $A_{1}=0$.
We apply the Inductive Hypothesis \ref{empty: ind hyp} simultaneously
to the $A_{i}$ in such a way that, after admissible family acting
on $X$, the $A_{i}$ are normal, i.e. $A_{i}\left(X\right)=X^{\beta_{i}}U_{i}\left(X\right)$
for some $\beta_{i}\in\mathbb{K}^{m}$, $U_{i}\in\left(\widehat{\mathcal{C}}_{m}\right)^{\times}$,
and for some $l\in\left\{ 2,\ldots,d\right\} $ the series $A_{l}^{1/l}$
divides all the series $A_{i}^{1/i}$. Let $j\in\left\{ 1,\ldots m\right\} $
be such that the variable $X_{j}$ appears with a nonzero exponent
in the monomial $X^{\beta_{l}}$ and consider the family of blow-up
transformations $\left\{ \pi_{m+1,j}^{\lambda}:\ \lambda\in\mathbb{R}\cup\left\{ \pm\infty\right\} \right\} $.

After the transformations $\pi_{m+1,j}^{\pm\infty}$, the series $G$
has the form $Z^{\alpha_{1}}V\left(X,Z\right)$, where $V\in\left(\widehat{\mathcal{C}}_{m,1}\right)^{\times}$,
so in this case $F$ is normal, and we are done.

After the transformation $\pi_{m+1,j}^{0}$, the exponent of $X_{j}$
in the monomial $X^{\beta_{l}}$ has decreased by the quantity $l$.
By repeating the procedure and applying it to the other variables
appearing with a nonzero exponent in the monomial $X^{\beta_{l}}$
, we can reduce the order of regularity of $G$ to $\alpha_{1}-l$.

For $\lambda\in\mathbb{R}\setminus\left\{ 0\right\} $, after the
transformation $\pi_{m+1,j}^{\lambda}$, thanks to the fact that $A_{1}=0$,
the order of $G$ is at most $\alpha_{1}-1$.

This shows that, in the case when $Z$ is a standard variable, after
admissible family almost respecting $Z$, the series $F$ is normal.
\bigskip{}

Now suppose that $F\in\widehat{\mathcal{C}}_{m+1,0}$, i.e. $Z$ is
a generalised variable. By Proposition \ref{prop: finite pres}, we
may suppose that $F$ admits a finite presentation as in Definition
\ref{def: finite pres}. We can apply the Inductive Hypothesis \ref{empty: ind hyp}
simultaneously to $H_{1},\ldots,H_{d}$ in such a way that, after
admissible family, we have
\[
G\left(X,Z\right)=Z^{\alpha_{1}}\tilde{U}_{1}\left(X,Z\right)+X^{\Gamma_{2}}Z^{\alpha_{2}}\tilde{U}_{2}\left(X,Z\right)+\ldots+X^{\Gamma_{d}}Z^{\alpha_{d}}\tilde{U}_{d}\left(X,Z\right),
\]
for some units $\tilde{U}_{i}\in\left(\widehat{\mathcal{C}}_{m+1,0}\right)^{\times}$,
and the exponents $\Gamma_{i}=\left(\gamma_{i}^{\left(1\right)},\ldots,\gamma_{i}^{\left(m\right)}\right)$
are such that the monomials $\left\{ X^{\frac{\Gamma_{i}}{\alpha_{1}-\alpha_{i}}}:\ i=2,\ldots,d\right\} $
are linearly ordered by division. Let $i_{0}\in\left\{ 2,\ldots,d\right\} $
be smallest with the property that
\[
\forall i\in\left\{ 2,\ldots,d\right\} ,\ \forall j\in\left\{ 1,\ldots,m\right\} ,\ \ \frac{\gamma_{i_{0}}^{\left(j\right)}}{\alpha_{1}-\alpha_{i_{0}}}\leq\frac{\gamma_{i}^{\left(j\right)}}{\alpha_{1}-\alpha_{i}}.\tag{\#}
\]
Suppose $\gamma_{i_{0}}^{\left(1\right)}\not=0$ and perform a ramification
of the variable $X_{1}$ with exponent $\gamma:=\frac{\gamma_{i_{0}}^{\left(1\right)}}{\alpha_{1}-\alpha_{i_{0}}}$.
We consider the family of blow-up transformations $\left\{ \pi_{m+1,1}^{\lambda}:\ \lambda\in\left[0,\infty\right]\right\} $.

After the transformation $\pi_{m+1,1}^{\infty}$, we can write
\[
G\left(X,Z\right)=Z^{\alpha_{1}}\left[\tilde{U}_{1}\left(X,Z\right)+X^{\Gamma_{2}}Z^{\beta_{2}}\tilde{U}_{2}\left(X,Z\right)+\ldots+X^{\Gamma_{d}}Z^{\beta_{d}}\tilde{U}_{d}\left(X,Z\right)\right],
\]
where $\beta_{i}:=\frac{\gamma_{i}^{\left(1\right)}}{\gamma_{i_{0}}^{\left(1\right)}}\left(\alpha_{1}-\alpha_{i_{0}}\right)+\alpha_{i}-\alpha_{1}$
is nonnegative, thanks to (\#). Notice that, since by (\#) every $\gamma_{i}^{\left(1\right)}$
is nonzero, the expression between square brackets is a unit. Hence
in this case $F$ has a finite presentation of order $1$, i.e. $F$
is normal, and we are done.

After the transformation $\pi_{m+1,1}^{0}$, we can write
\[
G\left(X,Z\right)=X_{1}^{\gamma\alpha_{1}}\left[Z^{\alpha_{1}}\tilde{U}_{1}\left(X,Z\right)+X^{\Delta_{2}}Z^{\alpha_{2}}\tilde{U}_{2}\left(X,Z\right)+\ldots+X^{\Delta_{d}}Z^{\alpha_{d}}\tilde{U}_{d}\left(X,Z\right)\right],
\]
where $\Delta_{i}=\left(\delta_{i}^{\left(1\right)},\ldots,\delta_{i}^{\left(m\right)}\right):=\left(\gamma_{i}^{\left(1\right)}-\gamma_{i_{0}}^{\left(1\right)}\frac{\alpha_{1}-\alpha_{i}}{\alpha_{1}-\alpha_{i_{0}}},\gamma_{i}^{\left(2\right)},\ldots,\gamma_{i}^{\left(m\right)}\right)$
. Remark that, by (\#), the exponents $\delta_{i}^{\left(1\right)}$
are nonnegative and $\delta_{i_{0}}^{\left(1\right)}=0$. Hence, up
to factoring out by a power of $X_{1}$, the variable $X_{1}$ does
not appear any more in the $i_{0}^{\text{th}}$ term of the above
finite presentation. By repeating this step with the other variables
$X_{j}$ such that $\gamma_{i_{0}}^{\left(j\right)}\not=0$, we obtain
\[
G\left(X,Z\right)=X^{\Delta}\left[Z^{\alpha_{i_{0}}}V\left(X,Z\right)+X^{\Delta'_{i_{0}+1}}Z^{\alpha_{i_{0}+1}}\tilde{U}_{i_{0}+1}\left(X,Z\right)+\ldots+X^{\Delta_{d}'}Z^{\alpha_{d}}\tilde{U}_{d}\left(X,Z\right)\right],
\]
where $V\in\left(\widehat{\mathcal{C}}_{m+1,0}\right)^{\times}$,
the components of $\Delta$ are $\frac{\alpha_{1}\gamma_{i_{0}}^{\left(j\right)}}{\alpha_{1}-\alpha_{i_{0}}}$
and the components of $\Delta_{i}'$ are $\gamma_{i}^{\left(j\right)}-\gamma_{i_{0}}^{\left(j\right)}\frac{\alpha_{1}-\alpha_{i}}{\alpha_{1}-\alpha_{i_{0}}}$.
Hence $F$ has a finite presentation of order $d-i_{0}+1$.

If $\lambda\in\left(0,\infty\right)$, then after the transformation
$\pi_{m+1,1}^{\lambda}$, the variable $Z$ is standard and we have
reduced to the case $F\in\widehat{\mathcal{C}}_{m,1}$.

Finally, notice that if $F\in\widehat{\mathcal{C}}_{0,m+1}$, i.e.
all the variables are standard, then we can start the proof by first
ramifying the variables $X$ with exponent $d!$, in order to ensure
that only natural exponents appear in the series $A_{l}^{1/l}$.

\end{proof}
\begin{rem}
\label{rem: no flatness}In the case when the set of admissible exponents
is $\mathbb{N}$ the proof of Theorem \ref{thm: vertical monomialisation}
can be simplified. In fact, by Noetherianity of $\mathbb{R}\left\llbracket X,Y\right\rrbracket $,
the $\mathbb{R}\left\llbracket X,Y\right\rrbracket $-ideal generated
by the family $\mathcal{G}$ is finitely generated and one obtains
immediately a \textquotedblleft{}formal\textquotedblright{} finite
presentation for $F$, where the units are formal power series, not
necessarily belonging to $\widehat{\mathcal{C}}$. After monomialising
the generators and factoring out an appropriate monomial, this automatically
implies that $F$ is regular of some order in the variable $Z$. Hence
we can dispense with Proposition \ref{prop: finite pres} and implement
directly the last part of the proof of Theorem \ref{thm: vertical monomialisation}. 

This argument also implies that in the real analytic setting, in order
to obtain regularity in a chosen variable $Z$, there is no need to
prove a convergent version of the finite presentation in Definition
\ref{def: finite pres}. In their proof of quantifier elimination
for the real field with restricted analytic functions and the function
$x\mapsto1/x$, Denef and van den Dries prove such a convergent version
(see \cite[Lemma 4.12]{vdd:d}), by invoking a consequence of faithful
flatness in \cite[(4C)(ii)]{matsumura:commutative_algebra}. Our remark
implies that this is not necessary.
\end{rem}
\bibliographystyle{amsalpha}
\def\cprime{$'$}
\providecommand{\bysame}{\leavevmode\hbox to3em{\hrulefill}\thinspace}
\providecommand{\MR}{\relax\ifhmode\unskip\space\fi MR }
% \MRhref is called by the amsart/book/proc definition of \MR.
\providecommand{\MRhref}[2]{%
  \href{http://www.ams.org/mathscinet-getitem?mr=#1}{#2}
}
\providecommand{\href}[2]{#2}

\bigskip{}

\emph{Tamara Servi}

\emph{Centro de Matem\'atica e Applica\c coes Fundamentais}

\emph{Av. Prof. Gama Pinto, 2 }

\emph{1649-003 Lisboa (Portugal)}

\emph{email: }tamara.servi@gmail.com
\end{document}